\newtheorem{theorem}{Theorem}[section] 
\newtheorem{lemma}[theorem]{Lemma}
\newtheorem{conjecture*}[theorem]{Conjecture}
\newtheorem{proposition}[theorem]{Proposition}
\newtheorem{corollary}[theorem]{Corollary}
\newtheorem{question}[theorem]{Question}
\newtheorem*{conclusion*}{Conclusion}
\newtheoremstyle{notauto}{}{}{\itshape}{}{\bfseries}{.}{0.5em}{\thmnote{#3}}
\theoremstyle{notauto}
\theoremstyle{definition}
\newtheorem{definition}[theorem]{Definition}
\theoremstyle{remark}
\newtheorem{remark}[theorem]{Remark}
\begin{document}
	
\baselineskip13.5pt
	
\title[  A Generalization of Hall-Wielandt Theorem]{ A Generalization of Hall-Wielandt Theorem }
\author{{M.Yas\.{I}r} K{\i}zmaz }
\address{Department of Mathematics, Middle East Technical University, Ankara 06531, Turkey}
\email{yasir@metu.edu.tr}
\subjclass[2010]{20D10, 20D20}
\keywords{controlling $p$-transfer, $p$-nilpotency}
\begin{abstract}
	Let $G$ be a finite group and $P\in Syl_p(G)$. We denote the $k$'th term of the upper central series of $G$ by $Z_k(G)$ and the norm of $G$ by $Z^*(G)$. In this article, we prove that if for every tame intersection $P\cap Q$ such that $Z_{p-1}(P)<P\cap Q<P$, the group $N_G(P\cap Q)$ is $p$-nilpotent  then $N_G(P)$ controls $p$-transfer in $G$.
  For $p=2$, we sharpen our results by proving if for every tame intersection $P\cap Q$ such that $Z^*(P)<P\cap Q<P$, the group $N_G(P\cap Q)$ is $p$-nilpotent then $N_G(P)$ controls $p$-transfer in $G$. We also obtain several corollaries which give sufficient conditions for $N_G(P)$ to controls $p$-transfer in $G$ as a generalization of some well known theorems, including Hall-Wielandt theorem and Frobenius normal complement theorem.
\end{abstract}
\maketitle	

\section{introduction}
Throughout the article, we assume that all groups are finite. Notation and terminology are standard as in \cite{1}. Let $G$ be a group and $P\in Syl_p(G)$. We say that $G$ is \textit{$p$-nilpotent} if it has a normal Hall $p'$-subgroup. Let $N$ be a subgroup of $G$ such that $|G:N|$ is coprime to $p$. Then $N$ is said to \textit{control $p$-transfer} in $G$ if $N/A^p(N)\cong G/A^p(G)$. A famous result of Tate in \cite{2} shows that $N/A^p(N)\cong G/A^p(G)$ if and only if $N/O^p(N)\cong G/O^p(G)$. Thus, $N$ controls $p$-transfer in $G$ if and only if $N/O^p(N)\cong G/O^p(G)$. In this case, one can also deduce that $N$ is $p$-nilpotent if and only if $G$ is $p$-nilpotent.

 By a result due to Burnside,
  $N_G(P)$ controls $p$-transfer in $G$ if $P$ is abelian. Later works of Hall and Wielandt showed that $N_G(P)$ controls $p$-transfer in $G$ if the class of $P$ is not ``too large". Namely, they proved the following generalization of Burnside's result.
\begin{theorem}[Hall-Wielandt]
	If the class of $P$ is less than $p$, then $N_G(P)$ controls $p$-transfer in $G$.
\end{theorem}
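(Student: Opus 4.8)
The plan is to reduce the statement to a single fusion-theoretic containment and then exploit the regularity of $P$. By the theorem of Tate quoted above, it is enough to show $N/O^p(N)\cong G/O^p(G)$ for $N=N_G(P)$; via the same Tate equivalence this is the same as comparing the largest abelian $p$-quotients, i.e. $N/A^p(N)\cong G/A^p(G)$. Since $P\trianglelefteq N$ we have $P\in Syl_p(N)$, so the focal subgroup theorem applies to both $G$ and $N$ and gives $G/A^p(G)\cong P/\mathrm{Foc}_G(P)$ and $N/A^p(N)\cong P/\mathrm{Foc}_N(P)$, where $\mathrm{Foc}_G(P)=P\cap G'$. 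Because every $N$-conjugacy between elements of $P$ is in particular a $G$-conjugacy, $\mathrm{Foc}_N(P)\subseteq\mathrm{Foc}_G(P)$ automatically; and because $P$ is normal in $N$, the focal subgroup theorem identifies $\mathrm{Foc}_N(P)=[P,N]$. Thus the two quotients are isomorphic precisely when $\mathrm{Foc}_G(P)=[P,N_G(P)]$, and the whole problem collapses to the containment $\mathrm{Foc}_G(P)\subseteq [P,N_G(P)]$: every focal commutator arising from $G$-fusion must already be produced inside the normalizer.

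To attack this containment I would invoke Grün's first theorem, which refines the focal subgroup by separating the contribution of the normalizer from a deeper, commutator-level contribution: in the form I would use it,
$$\mathrm{Foc}_G(P)=[P,N_G(P)]\cdot\big\langle\, P\cap (P^g)' : g\in G \,\big\rangle .$$
Equivalently, Alperin's fusion theorem lets one factor any $G$-conjugation between elements of $P$ through conjugations in the normalizers $N_G(P\cap P^g)$ of tame intersections together with conjugation inside $N_G(P)$. Either way, the residual obstruction to $\mathrm{Foc}_G(P)=[P,N_G(P)]$ lives in the derived subgroups $(P^g)'=(P')^g$ of the conjugate Sylow subgroups, hence already in the second term of the lower central series of $P$.

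The hypothesis now enters through P. Hall's theorem that a $p$-group of class less than $p$ is regular, together with the elementary fact that such a $P$ satisfies $\gamma_p(P)=1$. The terms $P\cap(P')^g$ lie in the $G$-closure of $\gamma_2(P)$; feeding them back into the focal/fusion analysis pushes the surviving commutators one step further down the lower central series at each stage, into $\gamma_3(P),\gamma_4(P),\dots$. Since $\gamma_p(P)=1$, an induction on the class of $P$, with regularity supplying the power and commutator identities needed to keep these commutators inside the already-controlled focal subgroup, terminates after fewer than $p$ steps with no focal element having escaped $[P,N_G(P)]$. This yields $\mathrm{Foc}_G(P)=[P,N_G(P)]$ and hence the theorem.

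I expect the decisive difficulty to be exactly this descent: one must verify that at each stage the commutator contributions genuinely drop a term in the lower central series and that regularity keeps them absorbed into $[P,N_G(P)]$ rather than generating new fusion outside the normalizer. The bound ``class ${}<p$'' is precisely what guarantees the descent reaches $\gamma_p(P)=1$ before anything escapes; from the vantage point of the present paper this is the same mechanism that renders the condition $Z_{p-1}(P)<P\cap Q<P$ vacuous when the class of $P$ is less than $p$, so that Hall--Wielandt reappears as the degenerate case of the main theorem.
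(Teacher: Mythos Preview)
The paper does not give a standalone proof of Hall--Wielandt; it is quoted as a classical result and recovered as the degenerate case of Theorem~\ref{main theorem}: if the class of $P$ is less than $p$ then $Z_{p-1}(P)=P$, so there is no tame intersection with $Z_{p-1}(P)<P\cap Q<P$ and the hypothesis of Theorem~\ref{main theorem} is vacuously satisfied. You note this yourself in your last sentence, and that observation \emph{is} the paper's proof. Everything before that sentence is a different argument, and it is there that the trouble lies.

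Your reduction to $\mathrm{Foc}_G(P)=[P,N_G(P)]$ and the invocation of Gr\"un's first theorem are fine. The gap is the descent. From Gr\"un you have
\[
\mathrm{Foc}_G(P)=[P,N_G(P)]\cdot\bigl\langle\,P\cap (P')^{g}:g\in G\,\bigr\rangle,
\]
and you want to show the second factor is already inside $[P,N_G(P)]$. You assert that ``feeding them back into the focal/fusion analysis pushes the surviving commutators one step further down the lower central series,'' but there is no mechanism offered for this. The subgroup $P\cap(P')^{g}$ lies in $\gamma_2(P^{g})$, not in $\gamma_2(P)$, and reapplying Gr\"un to $G$ simply returns the same decomposition; it does not produce terms in $\gamma_3$ of anything. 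Nor does regularity of $P$ by itself say anything about how $P$ meets $(P')^{g}$ for a foreign conjugate $P^{g}$. So as written the induction never gets started: you have not exhibited a single step of the descent, only named its hoped-for terminus $\gamma_p(P)=1$.

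What actually makes Hall--Wielandt work---and what the paper's machinery encodes in Lemma~\ref{main lemma}---is not an iterated Gr\"un argument but a direct evaluation of the transfer using the Hall--Petrescu collection formula. One computes the contribution of each orbit to the (pre)transfer and uses the expansion
\[
(nu)^p\equiv n^{p}u^{p}[u,n]^{\binom{p}{2}}\cdots[u,n,\ldots,n]_{p-1}\pmod{\text{deeper commutators}},
\]
valid precisely because the class is less than $p$, to see that every orbit contributes trivially modulo the target. This is where the hypothesis ``class ${}<p$'' really bites: it kills the last commutator $[u,n,\ldots,n]_{p-1}$ outright, and the binomial coefficients kill the earlier ones. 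That is a transfer computation inside $P$, not a statement about intersections $P\cap(P')^{g}$, and it is the step your proposal is missing. If you want a self-contained proof along classical lines, replace the Gr\"un-plus-descent sketch with this transfer evaluation; if you are happy to cite the paper's results, your final sentence already suffices.
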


 In 1958, Yoshida introduced the concept of character theoretic transfer and by the means of it, he obtained the following generalization of Hall-Wielandt theorem.

\begin{theorem}\cite[Theorem 4.2]{7}\label{isc}
	If $P$ has no quotient isomorphic to $\mathbb Z_p \wr \mathbb Z_p$ then $N_G(P)$ controls $p$-transfer in $G$.
\end{theorem}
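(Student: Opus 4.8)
The plan is to reduce the assertion to an equality of focal subgroups and then to verify that equality by Yoshida's character-theoretic computation of the transfer. First, since $N=N_G(P)$ satisfies $P\in Syl_p(N)$, control of $p$-transfer means exactly that the inclusion-induced map $N/A^p(N)\to G/A^p(G)$ is an isomorphism; by the focal subgroup theorem both sides are identified with $P/\mathrm{Foc}_N(P)$ and $P/\mathrm{Foc}_G(P)$, where $\mathrm{Foc}_H(P)=\langle x^{-1}x^h : x,x^h\in P,\ h\in H\rangle$. As $N'\le G'$ forces $\mathrm{Foc}_N(P)\le\mathrm{Foc}_G(P)$, only the reverse containment is at stake, and the induced map is an isomorphism iff $\mathrm{Foc}_N(P)=\mathrm{Foc}_G(P)$. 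Dualizing, and noting that $\mathrm{Foc}_N(P)=[P,N]$ because $P\trianglelefteq N$, a linear character $\lambda$ of $P$ is trivial on $\mathrm{Foc}_N(P)$ precisely when it is $N_G(P)$-invariant. Thus the theorem amounts to showing that every $N_G(P)$-invariant linear character $\lambda$ of $P$ is already trivial on $\mathrm{Foc}_G(P)$, i.e. is ``$G$-stable''.

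Second, I would detect $G$-stability of $\lambda$ through the induced character $\lambda^G=\mathrm{Ind}_P^G\lambda$, using the determinant--transfer identity $\det(\lambda^G)=\varepsilon_{G/P}\cdot(\lambda\circ V)$, where $V\colon G\to P^{ab}$ is the transfer and $\varepsilon_{G/P}$ is the sign character of the action of $G$ on $G/P$. Since $\lambda\circ V$ is a linear character of $G$, it is trivial on $G'$, and analyzing its restriction to $P$ (using that $[G:P]$ is prime to $p$) translates $G$-stability of $\lambda$ into the vanishing of certain correction terms. To produce those terms I would restrict $\lambda^G$ to $P$ and expand by Mackey, $\mathrm{Res}_P\mathrm{Ind}_P^G\lambda=\bigoplus_{PgP}\mathrm{Ind}_{P\cap P^g}^P({}^g\lambda)$, and then pass to determinants: each summand contributes the transfer, computed internally in $P$, of ${}^g\lambda$ from the tame intersection $P\cap P^g$ up to $P$. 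The double coset $g=1$ returns $\lambda$ itself, the double cosets with $g\in N_G(P)$ contribute only $N$-fusion (hence nothing new, by $N$-invariance of $\lambda$), and the entire obstruction is concentrated in the proper tame intersections $P\cap P^g<P$.

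Third --- and this is where the hypothesis enters --- I would show that a nontrivial contribution from a proper intersection forces $P$ to have a quotient isomorphic to $\mathbb Z_p\wr\mathbb Z_p$. The mechanism is that nonvanishing of the internal transfer of ${}^g\lambda$ modulo $[P,N]$ supplies an element $t\in P$ inducing a $p$-cycle on a set of $p$ distinct conjugates $\lambda,{}^t\lambda,\dots,{}^{t^{p-1}}\lambda$ of $\lambda$; assembling the corresponding rank-$p$ elementary abelian quotient with this cyclic permutation on top yields a surjection of $P$ onto $\mathbb Z_p\wr\mathbb Z_p$, contrary to hypothesis. Hence all proper contributions vanish, $\lambda$ is $G$-stable, and the focal subgroups agree. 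The main obstacle will be precisely this reconstruction: controlling the orders of the elements involved and verifying that the permutation they induce is a genuine regular $\mathbb Z_p$-action, so that the minimal configuration one builds is exactly the wreath product rather than a smaller section. For $p=2$ the sign character $\varepsilon_{G/P}$ can interfere with the transfer computation, so I would handle that case by a direct analysis of $\mathbb Z_2\wr\mathbb Z_2\cong D_8$, matching the general argument.
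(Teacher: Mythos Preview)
The paper does not give its own proof of this statement. Theorem~\ref{isc} is quoted from Yoshida \cite[Theorem 4.2]{7} and stated without proof; the introduction only records that Yoshida's original argument is character-theoretic and that Isaacs later supplied a character-free proof (section~10 of \cite{1}). The result is invoked once, in step~(5) of the proof of Theorem~\ref{Z version}, but never established in the paper.

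Your outline is a reasonable sketch of Yoshida's original character-theoretic route: reduce to equality of focal subgroups, translate to $N_G(P)$-invariant linear characters of $P$, expand $\det(\mathrm{Ind}_P^G\lambda)$ via Mackey and the determinant--transfer identity, and argue that any nonvanishing contribution from a proper intersection $P\cap P^g$ forces a quotient $\mathbb Z_p\wr\mathbb Z_p$. Since the paper offers nothing to compare against, the only relevant remark is that the ambient methods of the present paper (Lemmas~\ref{basic} and~\ref{main lemma}) are drawn from Isaacs's character-free pretransfer calculus rather than from character theory, so if consistency with the surrounding toolkit matters, Isaacs's proof would be the closer model. As to the substance of your plan, the third step is where all the content lies, and what you have written there is still an intention rather than an argument: extracting an honest surjection $P\twoheadrightarrow\mathbb Z_p\wr\mathbb Z_p$ from a nonvanishing determinant term requires locating a specific subgroup of index $p$ and an elementary abelian section on which a transversal element acts as a regular $p$-cycle, with the orders controlled so that the quotient is exactly the wreath product and not something smaller. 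That is the delicate core of both Yoshida's and Isaacs's proofs and cannot be elided.
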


 The original proof of this strong theorem depends on character theory. However, Isaacs provided a character free proof to Yoshida's theorem in his book (see section 10 in \cite{1}). Taking advantages of his method, we obtain another generalization of Hall-Wielandt theorem.
 
 Before presenting our main theorem, it is convenient here to give some conventions that we adopt throughout the paper.
Let $P,Q\in Syl_p(G)$ (possibly $P=Q$). We say that $P\cap Q$ is a \textbf{tame intersection} if both $N_P(P\cap Q)$ and $N_Q(P\cap Q)$ are Sylow $p$-subgroups of $N_G(P\cap Q)$. For simplicity, we use directly ``$X\cap Y$ is a tame intersection" without specifying what $X$ and $Y$ are. In this case, it should be understood that $X$ and $Y$ are Sylow $p$-subgroups of $G$ for a prime $p$ dividing order of $G$ and $X\cap Y$ is a tame intersection according to the formal definition.
  
 The following is the main theorem of our article. 

\begin{theorem}\label{main theorem}
 Assume that for each tame intersection $Z_{p-1}(P) <P\cap Q<P$, the group $N_G(P\cap Q)$ is $p$-nilpotent. Then $N_G(P)$ controls $p$-transfer in $G$.
\end{theorem}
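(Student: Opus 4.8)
The plan is to work with the focal-subgroup reformulation of control of transfer: by the focal subgroup theorem together with Tate's theorem (quoted in the introduction), $N_G(P)$ controls $p$-transfer in $G$ if and only if $P\cap G' = P\cap N_G(P)'$. Thus it suffices to show that every $G$-fusion between elements of $P$ contributing to $P\cap G'$ is already realized inside $N_G(P)$. First I would dispose of the generic case using Yoshida's theorem (Theorem \ref{isc}): if $P$ has no quotient isomorphic to $\mathbb Z_p\wr\mathbb Z_p$, then $N_G(P)$ already controls $p$-transfer and we are done. Hence assume $P$ has a normal subgroup $M$ with $P/M\cong W:=\mathbb Z_p\wr\mathbb Z_p$. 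Note that $W$ has nilpotency class exactly $p$, so this regime is genuinely disjoint from the Hall--Wielandt hypothesis; indeed, when the class of $P$ is less than $p$ one has $Z_{p-1}(P)=P$, the interval $Z_{p-1}(P)<P\cap Q<P$ is empty, and the statement collapses to the Hall--Wielandt theorem. This is the consistency check the proof must respect.

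Next I would bring in Alperin's fusion theorem to localize the fusion. It expresses any $G$-conjugation identifying two elements of $P$ as a composite of conjugations by elements of the normalizers $N_G(R)$, where $R$ ranges over tame intersections $R\le P$ (together with $N_G(P)$ itself). Correspondingly, $P\cap G'$ is generated by $P\cap N_G(P)'$ and the local contributions $N_P(R)\cap N_G(R)'$ pushed into $P$. The key reduction is to discard the large intersections: if $Z_{p-1}(P)<R<P$, then by hypothesis $N_G(R)$ is $p$-nilpotent, and a $p$-nilpotent group $H=O_{p'}(H)\rtimes S$ has $N_H(S)$ controlling its own $p$-transfer (both $H/O^p(H)$ and $N_H(S)/O^p(N_H(S))$ are isomorphic to $S$), so the fusion such an $R$ supplies is already $N_{N_G(R)}(N_P(R))$-fusion and adds nothing to the focal subgroup beyond what $N_G(P)$ contributes. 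After this step the only possible source of extra fusion is a tame intersection $R$ that does \emph{not} strictly contain $Z_{p-1}(P)$; in particular $N_G(R)$ is not $p$-nilpotent, since otherwise the same absorption argument applies.

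The heart of the argument, and the step I expect to be the main obstacle, is to rule out such a ``low'' essential intersection in the presence of the $W$-quotient. I would argue that an essential tame intersection $R$ --- one whose normalizer is not $p$-nilpotent and genuinely enlarges $P\cap G'$ --- is self-centralizing and $p$-centric, and then transport the obstruction through the quotient $P/M\cong W$: the place in $W$ where uncontrolled transfer is detected, following Isaacs' character-free analysis of Yoshida's theorem, lies strictly above $Z_{p-1}(W)$, forcing the corresponding $R$ to strictly contain $Z_{p-1}(P)$. Concretely, one shows that any fusion supported on a section mapping into $Z_{p-1}(W)$ is governed by a subgroup of class at most $p-1$, so a Hall--Wielandt--type argument makes that fusion $N_G(P)$-controlled; the surviving obstruction must therefore live above $Z_{p-1}(P)$. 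This contradicts the conclusion of the previous paragraph that the surviving essential $R$ does not strictly contain $Z_{p-1}(P)$, and the contradiction proves that $N_G(P)$ controls $p$-transfer. The delicate point throughout is matching the abstract location of the Alperin-theoretic essential subgroup with the explicit position of the transfer obstruction inside the wreath quotient, which is exactly where Isaacs' transfer machinery, rather than a soft fusion-systems argument, is needed.
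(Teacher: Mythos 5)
Your opening reductions are sound in substance: the focal-subgroup reformulation is correct, Yoshida's theorem does dispose of the case where $P$ has no $\mathbb Z_p\wr\mathbb Z_p$ quotient, and contributions from tame intersections strictly between $Z_{p-1}(P)$ and $P$ can indeed be absorbed --- although your justification of that absorption conflates control of $p$-transfer with control of fusion; the correct (and easy) argument is that in $H=O_{p'}(H)\rtimes S$, if $x,x^h\in S$ and $h=ks$ with $k\in O_{p'}(H)$, $s\in S$, then $x^{-1}x^{k}\in O_{p'}(H)\cap S=1$, so $x^h=x^s$. The proposal then stands or falls with your ``heart'' paragraph, and there it falls: no mechanism is given, and none is apparent, for ``transporting the obstruction through the quotient $P/M\cong \mathbb Z_p\wr\mathbb Z_p$.'' First, $M$ is normal in $P$ but not in $G$, so there is no ambient group in which ``the fusion detected in $W=P/M$'' even makes sense; Yoshida's theorem (and Isaacs' proof of it) is a global existence statement and localizes nothing relative to Alperin components or the upper central series. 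Second, comparing a tame intersection $R$ with a subgroup of $W$ is only meaningful when $M\leq R$, and a tame intersection has no reason to contain $M$; without $M\leq R$ the image $RM/M$ carries no information about the position of $R$ relative to $Z_{p-1}(P)$, so the key deduction ``the obstruction lies strictly above $Z_{p-1}(W)$, forcing $R>Z_{p-1}(P)$'' simply does not follow. Third, the auxiliary claim that ``fusion supported on a section mapping into $Z_{p-1}(W)$ is governed by a subgroup of class at most $p-1$'' is undefined and unproved; the Hall--Wielandt theorem is a statement about the whole group when the class of $P$ is less than $p$, and no relative version applying to a single Alperin component is available. Since the theorem is a transfer statement and not a fusion statement, any correct proof must at some point run an actual transfer computation, and your proposal contains none.

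It is instructive to compare with the paper's proof, which sidesteps your problematic case entirely. The paper applies Alperin's fusion theorem not to an arbitrary fusion but to the conjugation carrying a Sylow subgroup $Q$ containing a characteristic subgroup $C\supseteq Z_{p-1}(P)$ back to $P$: along such a chain every tame intersection $P\cap Q_i$ contains $N_{Q^x}(C)>C\geq Z_{p-1}(P)$, so the hypothesis applies to every link; $p$-nilpotency gives the factorization $N_G(P\cap Q_i)=N_P(P\cap Q_i)\,C_G(P\cap Q_i)$, each conjugating element is then seen to fix $C$, and $C$ is weakly closed in $P$. The theorem then follows from Proposition \ref{main prop}, and that proposition is where the genuine transfer-theoretic work lives: Wielandt's weakly-closed-subgroup theorem (\cite{4}, Theorem 14.4.2) applied to $Z=Z_{p-1}(P)$, induction on $|G|$ (passing to $N_G(Z)$ or to $G/Z$), and crucially Lemma \ref{main lemma}(a), a Hall--Wielandt-style pretransfer and commutator computation exploiting $[Z,g,\ldots,g]_{p-1}\leq\Phi(Z)$, which lifts control of $p$-transfer from $G/Z$ back to $G$. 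Your proposal has no counterpart to either of these ingredients; supplying them would not be a routine repair but would amount to reconstructing the paper's actual argument.
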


The next remark shows that our theorem extends the result of Hall-Wielandt theorem in a different direction than what Yoshida's theorem does.

\begin{remark}
	Let $G$ be a group having a Sylow $p$-subgroup $P$ isomorphic to $\mathbb Z_p \wr \mathbb Z_p$. Clearly, Yoshida's theorem is not applicable here. If $N_G(P)$ does not control $p$-transfer in $G$ then there exists a Sylow $p$-subgroup $Q$ of $G$ such that $|P:P\cap Q|=p$ and $N_G(P\cap Q)$ is not $p$-nilpotent by Theorem \ref{main theorem}. Notice that this is exactly the case where $G=S_4$ and $p=2$.  We can say in other way that $N_G(P)$ controls $p$-transfer in $G$ if $|P:P\cap P^x|> p$ for each $x\in G\setminus N_G(P)$.
\end{remark}
Some of the immediate corollaries of Theorem \ref{main theorem} are as follows.
\begin{corollary}\label{main cor}
	Assume that for any two distinct Sylow $p$-subgroups $P$ and $Q$ of $G$, the inequality $|P\cap Q|\leq |Z_{p-1}(P)|$ is satisfied. Then $N_G(P)$ controls $p$-transfer in $G$.
\end{corollary}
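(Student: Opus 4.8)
The plan is to derive the corollary directly from Theorem \ref{main theorem} by verifying that the theorem's hypothesis holds \emph{vacuously} under the stated assumption. Fix $P \in Syl_p(G)$. Theorem \ref{main theorem} requires that for every tame intersection satisfying $Z_{p-1}(P) < P\cap Q < P$ the normalizer $N_G(P\cap Q)$ be $p$-nilpotent; I would show that the assumption $|P\cap Q| \le |Z_{p-1}(P)|$ rules out the existence of \emph{any} such intersection, so that there is nothing to check and the conclusion follows immediately.

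The key step is an elementary comparison of orders. Suppose, toward a contradiction, that some Sylow $p$-subgroup $Q$ yields a tame intersection with $Z_{p-1}(P) < P\cap Q < P$. The strict inclusion $P\cap Q < P$ forces $Q\neq P$, so $P$ and $Q$ are distinct Sylow $p$-subgroups and the corollary's hypothesis applies, giving $|P\cap Q| \le |Z_{p-1}(P)|$. On the other hand, the strict inclusion $Z_{p-1}(P) < P\cap Q$ gives $|Z_{p-1}(P)| < |P\cap Q|$. These two inequalities are incompatible, which is the desired contradiction. Hence no tame intersection falls in the interval $Z_{p-1}(P) < P\cap Q < P$.

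Consequently the hypothesis of Theorem \ref{main theorem} is satisfied vacuously, and the theorem delivers the conclusion that $N_G(P)$ controls $p$-transfer in $G$. I do not anticipate any genuine obstacle here; the only point deserving a word of care is that $Z_{p-1}(P)$ is a characteristic subgroup of $P$ and hence a subgroup of $P$, so the relation $Z_{p-1}(P) < P\cap Q$ is read as proper subgroup containment, which is exactly what forces the strict cardinality inequality used above. Whether or not $Z_{p-1}(P)$ happens to lie inside $Q$, the relation $Z_{p-1}(P) < P\cap Q$ either fails outright or produces the order inequality, and in both cases no admissible intersection survives.
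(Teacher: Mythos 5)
Your proof is correct and is exactly the intended derivation: the paper states Corollary \ref{main cor} as an immediate consequence of Theorem \ref{main theorem} without further argument, and your observation that the order hypothesis makes the theorem's condition on tame intersections hold vacuously (since $Z_{p-1}(P) < P\cap Q < P$ would force both $|Z_{p-1}(P)| < |P\cap Q|$ and, as $P \neq Q$, the opposite inequality $|P\cap Q| \le |Z_{p-1}(P)|$) is precisely the reasoning the author leaves implicit.
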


 The next corollary is a generalization of the well-known Frobenius normal complement theorem, which guarantees the $p$-nilpotency of $G$ if $N_G(X)$ is $p$-nilpotent for each nontrivial $p$-subgroup $X$ of $P$.
\begin{corollary}\label{gen Frob}
	 Assume that for each tame intersection $Z_{p-1}(P) <P\cap Q$, the group $N_G(P\cap Q)$ is $p$-nilpotent. Then $G$ is $p$-nilpotent.
\end{corollary}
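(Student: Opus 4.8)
The plan is to deduce the corollary directly from Theorem \ref{main theorem} together with the consequence of Tate's theorem recorded in the introduction. The key observation is that the hypothesis stated here is exactly the hypothesis of Theorem \ref{main theorem} with the upper bound ``$P\cap Q<P$'' dropped; dropping that bound adjoins a single extra instance, namely the degenerate tame intersection $P\cap P=P$ obtained by taking $Q=P$. So the whole argument amounts to feeding the ``$P\cap Q<P$'' part into Theorem \ref{main theorem} and reading off the ``$P\cap Q=P$'' part separately.

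First I would record that the hypothesis restricted to tame intersections with $Z_{p-1}(P)<P\cap Q<P$ is precisely the hypothesis of Theorem \ref{main theorem}. Hence Theorem \ref{main theorem} applies and yields that $N_G(P)$ controls $p$-transfer in $G$; equivalently, $N_G(P)/O^p(N_G(P))\cong G/O^p(G)$.

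Next I would extract the $p$-nilpotency of $N_G(P)$ from the one remaining instance of the hypothesis. Taking $Q=P$ gives $N_P(P\cap Q)=N_P(P)=P=N_Q(P\cap Q)$, and since $P$ is a Sylow $p$-subgroup of $N_G(P)$, the intersection $P\cap P=P$ is a tame intersection. As it satisfies $Z_{p-1}(P)<P$, the hypothesis applies to it and gives that $N_G(P\cap Q)=N_G(P)$ is $p$-nilpotent.

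Finally I would combine the two facts. Because $N_G(P)$ controls $p$-transfer, the introductory remark (a consequence of Tate's theorem) asserts that $G$ is $p$-nilpotent if and only if $N_G(P)$ is $p$-nilpotent; since the previous step shows $N_G(P)$ is $p$-nilpotent, it follows that $G$ is $p$-nilpotent, as required. I do not expect a genuine obstacle here: the proof is essentially immediate from the main theorem. The only point that needs care is the verification that the degenerate case $P=P\cap P$ is a tame intersection falling under the hypothesis, i.e. that $Z_{p-1}(P)<P$; this single extra instance is exactly what supplies $p$-nilpotency of $N_G(P)$ and thereby upgrades control of $p$-transfer to full $p$-nilpotency of $G$, in the spirit of Frobenius's theorem.
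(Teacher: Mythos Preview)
The paper states Corollary~\ref{gen Frob} as an ``immediate corollary'' of Theorem~\ref{main theorem} and gives no separate proof, so your argument is precisely the intended one: feed the proper tame intersections into Theorem~\ref{main theorem} to get control of $p$-transfer, use the degenerate instance $Q=P$ to obtain that $N_G(P)$ is $p$-nilpotent, and conclude via Tate.

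The caveat you flag at the end is not merely a point ``needing care'' but an actual defect in the corollary as literally written. If $Z_{p-1}(P)=P$ (i.e.\ the class of $P$ is less than $p$), the hypothesis becomes vacuous because no tame intersection $P\cap Q$ can satisfy $Z_{p-1}(P)<P\cap Q$; yet the conclusion certainly need not hold (take $G=S_3$, $p=3$: here $P\cong\ZZ_3$, $Z_2(P)=P$, the hypothesis is empty, but $S_3$ is not $3$-nilpotent). So your proof is correct under the tacit assumption $Z_{p-1}(P)<P$, and that assumption cannot be dropped. The author presumably intends either the non-strict inequality $Z_{p-1}(P)\le P\cap Q$ (in which case the $Q=P$ instance always applies and your argument goes through verbatim) or an implicit restriction to the case where the class of $P$ is at least $p$. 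Either reading makes the result a genuine generalisation of Frobenius in the spirit described in the introduction; your derivation is the right one in both cases.
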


\begin{remark}
	The main ingredient in proving most of the $p$-nilpotency theorems including Thompson-Glauberman $p$-nilpotency theorems is the Frobenius normal complement theorem, and hence its above generalization can be used in proving more strong $p$-nilpotency theorems.
\end{remark}

When $p=2$, Theorem \ref{main theorem} guarantees that  if $N_G(P\cap Q)$ is $p$-nilpotent for each tame intersection $Z(P) <P\cap Q<P$,  then $N_G(P)$ controls $p$-transfer in $G$. In fact, we shall extend this result further.

Let $Z^*(P)$ denote the norm of $P$, which is defined as $$Z^*(P):=\bigcap_{H\leq P}  N_P(H).$$ We have clearly $Z(P)\leq Z^*(P)$. One can recursively define $Z^*_i(P)$ for $i\geq 1$ as the full inverse image of $Z^*(P/Z^*_{i-1}(P))$ in $P$ and set $Z^*_0(P)=1$. We also say that $P$ is of norm length at most $i$ if $Z^*_i(P)=P$. We should also note that it is well known that $Z^*(P)$ is contained in the second center of $P$.

\begin{theorem}\label{Z version}
	Assume that for each tame intersection $Z^*(P) <P\cap Q<P$, the group $N_G(P\cap Q)$ is $p$-nilpotent. Then $N_G(P)$ controls $p$-transfer in $G$.
\end{theorem}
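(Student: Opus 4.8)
The plan is to reduce to the prime $p=2$ and then run an induction on $|G|$ that parallels the proof of Theorem \ref{main theorem}, with the role of $Z_{p-1}(P)$ taken over by the norm. For odd $p$ nothing new is needed: since the norm lies in the second centre, $Z^*(P)\le Z_2(P)\le Z_{p-1}(P)$, so every tame intersection with $Z_{p-1}(P)<R<P$ already satisfies $Z^*(P)<R<P$. Hence the family of tame intersections appearing in the present hypothesis contains that of Theorem \ref{main theorem}; the present hypothesis is the stronger one, and the conclusion follows from Theorem \ref{main theorem}. So assume $p=2$, where instead $Z(P)=Z_1(P)\le Z^*(P)$ and genuine work is required. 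Writing $\mathrm{foc}_G(P)=P\cap[G,G]$, the statement ``$N_G(P)$ controls $2$-transfer'' is equivalent to $\mathrm{foc}_G(P)=[P,N_G(P)]$ (the inclusion $\supseteq$ being automatic), because $G/A^2(G)\cong P/\mathrm{foc}_G(P)$ and $[P,N_G(P)]=\mathrm{foc}_{N_G(P)}(P)$.

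By Alperin's fusion theorem in its refined (Goldschmidt) form, $\mathrm{foc}_G(P)$ is generated by the local focal subgroups $\mathrm{foc}_{N_G(R)}(N_P(R))$ as $R$ runs over the essential tame intersections together with $R=P$; essential subgroups are in particular self-centralizing, so $Z(P)\le R$, and tameness is exactly the statement that $N_P(R)\in Syl_2(N_G(R))$, which makes each of these well defined. It therefore suffices to prove $\mathrm{foc}_{N_G(R)}(N_P(R))\le[P,N_G(P)]$ for every essential $R<P$. When $N_G(R)$ is $2$-nilpotent this is immediate: in a $2$-nilpotent group $K\rtimes S$ one has $\mathrm{foc}(S)=S'$, whence $\mathrm{foc}_{N_G(R)}(N_P(R))=(N_P(R))'\le P'\le[P,N_G(P)]$. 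Combined with the hypothesis, every essential $R$ with $Z^*(P)<R<P$ is disposed of, so the only possible obstruction is an essential $R<P$ with $N_G(R)$ \emph{not} $2$-nilpotent.

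The crucial new input, which is what lets $Z^*(P)$ replace $Z(P)$, is that essential subgroups must contain the norm. Indeed, every $z\in Z^*(P)$ normalises every subgroup of $P$, hence sends each element of the $z$-invariant subgroup $R$ to a power of itself; thus $z$ induces a \emph{power automorphism} of $R$ (note $Z^*(P)\le N_P(R)\le N_G(R)$). Let $D\trianglelefteq N_G(R)$ be the preimage of the group of power automorphisms of $R$; since power automorphisms form an abelian normal subgroup of $\mathrm{Aut}(R)$ (Cooper's theorem), the image of $D$ in $\mathrm{Out}_G(R)=N_G(R)/R\,C_G(R)$ is an abelian normal subgroup, so its Sylow $2$-subgroup is a normal $2$-subgroup of $\mathrm{Out}_G(R)$. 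As $R$ is essential, $\mathrm{Out}_G(R)$ has a strongly $2$-embedded subgroup and hence $O_2(\mathrm{Out}_G(R))=1$; therefore the image of the $2$-group $Z^*(P)$ is trivial, i.e. $Z^*(P)\le R\,C_G(R)$, and intersecting with $P$ (using $C_P(R)\le R$) gives $Z^*(P)\le R$. Thus every essential subgroup contains $Z^*(P)$, and together with the previous paragraph the \emph{only} possible bad subgroup is $R=Z^*(P)$ itself.

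It remains to control $R=Z^*(P)$, which is normal in $P$ (characteristic, and anyway $[Z^*(P),P]\le Z(P)\le Z^*(P)$ since $Z^*(P)\le Z_2(P)$), so $N_P(R)=P$. If $N_G(Z^*(P))<G$ I would finish by induction: the hypothesis descends to $H=N_G(Z^*(P))$ (in which $P$ is Sylow and $N_H(P)=N_G(P)$), so $N_G(P)$ controls $2$-transfer in $H$ and $\mathrm{foc}_{N_G(Z^*(P))}(P)=[P,N_G(P)]$, as required. The genuine base case is $Z^*(P)\trianglelefteq G$; passing to $\overline G=G/Z^*(P)$ (where the hypothesis persists, using $Z^*(P)\le Z_2(P)$ and the Dedekind structure of the norm) and applying induction gives $P\cap[G,G]Z^*(P)=[P,N_G(P)]Z^*(P)$, whence by the modular law $\mathrm{foc}_G(P)=[P,N_G(P)]\cdot(\mathrm{foc}_G(P)\cap Z^*(P))$. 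The main obstacle is then the final lifting inequality $\mathrm{foc}_G(P)\cap Z^*(P)\le[P,N_G(P)]$: one must show that a focal element lying in the norm already lies in $P'$. This is exactly where the $p=2$ Dedekind classification of $Z^*(P)$ (abelian, or $Q_8\times E$ with $E$ elementary abelian) together with the quadratic action $[Z^*(P),P,P]=1$ are needed, in order to rewrite such an element as a product of commutators of elements of $P$; verifying this containment is the step I expect to require the most care.
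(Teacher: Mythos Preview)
Your reduction for odd $p$ is exactly the paper's, and your overall architecture for $p=2$---reduce to the case $Z^*(P)\trianglelefteq G$, pass to $G/Z^*(P)$, then lift---matches the paper's skeleton. The route you take to the reduction is genuinely different and interesting: you use the Alperin--Goldschmidt conjugation family and Cooper's theorem on power automorphisms to force $Z^*(P)\le R$ for every essential $R$, whereas the paper instead shows (by the same Alperin-fusion argument as in the proof of Theorem~\ref{main theorem}) that every characteristic subgroup of $P$ containing $Z^*(P)$ is weakly closed, then invokes Theorem~\ref{nwthm} to get that $N_G(Z^*(P))$ controls $p$-transfer, and reduces to $Z^*(P)\trianglelefteq G$ that way. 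Your argument that essential subgroups contain the norm is correct and elegant.

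However, there is a real gap, and it is precisely the step you flag yourself: the lifting inequality $\mathrm{foc}_G(P)\cap Z^*(P)\le [P,N_G(P)]$ once $Z^*(P)\trianglelefteq G$. You suggest this should follow from the Dedekind classification of $Z^*(P)$ together with $[Z^*(P),P,P]=1$, but you do not carry this out, and the paper's proof shows that this step is where the real work lies. The paper does \emph{not} close it by a structural argument of the kind you sketch; instead it runs an explicit transfer computation (Lemma~\ref{u,Z} and the orbit calculations in steps (3)--(4) of the proof) to force $|P:P\cap N^x|=2$ and then $P\cap N^x=Z^*(P)$, and finally appeals to Yoshida's theorem (Theorem~\ref{isc}) for the contradiction, since $|P:Z^*(P)|=2$ prevents any quotient of $P$ from being $D_8$. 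In particular, note that Lemma~\ref{main lemma}(a) for $p=2$ would require $[Z^*(P),g]\le\Phi(Z^*(P))$, which is \emph{not} a consequence of $Z^*(P)\le Z_2(P)$, so the generic lifting lemma does not apply here and something extra---in the paper, Yoshida---is genuinely needed. Your parenthetical that ``the hypothesis persists'' in $G/Z^*(P)$ is also underspecified: the paper does not claim this, and in fact sidesteps it entirely by applying Proposition~\ref{main prop} (weak closure of characteristic subgroups) to the quotient rather than the theorem's own hypothesis.

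In short: your fusion-theoretic reduction is a valid and attractive alternative to the paper's weak-closure reduction, but the proof is incomplete at the endgame, and the paper's resolution of that endgame (transfer computation plus Yoshida) is not something your outline supplies.
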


The following corollary is stronger than Corollary \ref{gen Frob} when $p=2$ although it is also true for odd primes (as Theorem \ref{Z version} is also true for odd primes).

\begin{corollary}\label{gen frob Z-version}
	Assume that for each tame intersection $Z^*(P) <P\cap Q$, the group $N_G(P\cap Q)$ is $p$-nilpotent. Then $G$ is $p$-nilpotent.
\end{corollary}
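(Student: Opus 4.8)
The plan is to obtain Corollary~\ref{gen frob Z-version} as a short consequence of Theorem~\ref{Z version} together with the transfer dictionary recalled in the introduction. The key point is that the hypothesis here is strictly stronger than that of Theorem~\ref{Z version}: a tame intersection with $Z^*(P)<P\cap Q<P$ is in particular a tame intersection with $Z^*(P)<P\cap Q$, so the assumption that $N_G(P\cap Q)$ be $p$-nilpotent holds throughout the (smaller) family required by Theorem~\ref{Z version}. Invoking that theorem therefore gives at once that $N_G(P)$ controls $p$-transfer in $G$.

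It then remains only to feed in the $p$-nilpotency of $N_G(P)$ itself, and for this I would take $Q=P$. Then $P\cap Q=P$ is a tame intersection (trivially, since $N_P(P)=P$ is Sylow in $N_G(P)$), and provided $Z^*(P)\neq P$ it satisfies $Z^*(P)<P\cap Q$; hence the standing hypothesis already forces $N_G(P)$ to be $p$-nilpotent. Combining the two observations, $N_G(P)$ controls $p$-transfer in $G$ and is $p$-nilpotent. As recorded in the introduction, for a subgroup controlling $p$-transfer one has $N_G(P)/O^p(N_G(P))\cong G/O^p(G)$, and consequently $N_G(P)$ is $p$-nilpotent if and only if $G$ is. Therefore $G$ is $p$-nilpotent.

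The only real subtlety, and the step I would treat with most care, is the degenerate case $Z^*(P)=P$, i.e.\ when $P$ is a Dedekind group: there the family of tame intersections with $Z^*(P)<P\cap Q$ is empty, the hypothesis carries no information about $N_G(P)$, and the argument above cannot even begin. In writing this up I would therefore either keep $Z^*(P)<P$ as a standing assumption (so that the case $Q=P$ is genuinely admissible and supplies the $p$-nilpotency of $N_G(P)$), or phrase the hypothesis so as to include the normaliser $N_G(P)$ explicitly. With that understood, the deduction is exactly the few lines above and involves no further difficulty, all of the substance having been absorbed into Theorem~\ref{Z version}.
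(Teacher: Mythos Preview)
Your argument is exactly the intended one: the paper states Corollary~\ref{gen frob Z-version} without proof, as an immediate consequence of Theorem~\ref{Z version}, and the deduction goes precisely as you write---the hypothesis specialises to that of Theorem~\ref{Z version}, so $N_G(P)$ controls $p$-transfer, while the admissible choice $Q=P$ (allowed by the paper's convention that $P=Q$ is permitted in a tame intersection) forces $N_G(P)$ itself to be $p$-nilpotent, whence $G$ is $p$-nilpotent by the transfer dictionary quoted in the introduction.

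Your caveat about the degenerate case $Z^*(P)=P$ is a genuine observation about the \emph{statement} rather than a gap in your proof: if $P$ is Dedekind the hypothesis is vacuous and the conclusion can fail (e.g.\ $G=S_3$, $p=3$), so the corollary as printed tacitly presumes $Z^*(P)<P$---equivalently, that $N_G(P)$ is among the normalisers assumed $p$-nilpotent. The same remark applies verbatim to Corollary~\ref{gen Frob}. Your suggested fix (record $Z^*(P)<P$, or simply add $N_G(P)$ to the list) is the right one.
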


The following theorem is a generalization of a theorem due to Gr\"{u}n (see Theorem 14.4.4 in \cite{4}),  which states that the normalizer of a $p$-normal subgroup controls $p$-transfer in $G$. We also use our next theorem in the proof of Theorem \ref{Z version}.

\begin{theorem}\label{nwthm}
	Let $K\leq Z^*(P)$ be a weakly closed subgroup of $P$. Then $N_G(K)$ controls $p$-transfer in $G$.
\end{theorem}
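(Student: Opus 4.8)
The plan is to reduce the statement, via the focal subgroup theorem together with Tate's theorem, to an assertion purely about fusion inside $P$. Since $K$ is weakly closed in $P$, taking $g\in P$ in the defining property $K^{g}\le P\Rightarrow K^{g}=K$ shows $K\trianglelefteq P$; hence $P\le N:=N_G(K)$ and $P\in Syl_p(N)$. Because $[G:N]$ is prime to $p$ while $G/A^p(G)$ is an abelian $p$-group, $G=N\,A^p(G)$ holds automatically, so $N/(N\cap A^p(G))\cong G/A^p(G)$; thus $N$ controls $p$-transfer exactly when $A^p(N)=N\cap A^p(G)$, and by the focal subgroup theorem this is equivalent to the equality of focal subgroups $P\cap N'=P\cap G'$. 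As $N\le G$ gives $P\cap N'\le P\cap G'$ for free, everything reduces to the reverse inclusion $P\cap G'\le P\cap N'$: every generator $x^{-1}x^{g}$ with $x,x^{g}\in P$ and $g\in G$ must already be realized by conjugation inside $N$.

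The mechanism I would use to realize $G$-fusion inside $N$ is a weak-closure/Sylow argument in the normalizer of a cyclic subgroup, and this is precisely where the hypothesis $K\le Z^*(P)$ enters. Fix a focal generator with $x,x^{g}\in P$ and set $P_1=P^{g^{-1}}$, so $x\in P_1$. Every element of the norm normalizes every subgroup of the ambient Sylow subgroup; hence $K\le Z^*(P)$ normalizes $\langle x\rangle\le P$, and $K^{g^{-1}}\le Z^*(P)^{g^{-1}}=Z^*(P_1)$ normalizes $\langle x\rangle\le P_1$. Consequently both $K$ and $K^{g^{-1}}$ lie in $L:=N_G(\langle x\rangle)$. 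This is the precise analogue of the classical situation $K\le Z(P)$, in which $K$ and $K^{g^{-1}}$ instead centralize $x$. A Sylow argument inside $L$ then produces $c\in L$ with $(K^{g^{-1}})^{c}\le P$, and weak closure upgrades this to $(K^{g^{-1}})^{c}=K$, i.e. $n:=g^{-1}c\in N$. Writing $x^{c}=x^{j}$ and substituting $g=cn^{-1}$, a short computation shows that modulo $P\cap N'$ the generator $x^{-1}x^{g}$ is congruent to $x^{-1}x^{c}=x^{\,j-1}$; thus it suffices to push the residual power $x^{j-1}$ into the focal subgroup of $N$.

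This residual power is the main obstacle, and it is the genuinely new feature compared with Gr\"{u}n's theorem. When $K\le Z(P)$ one may take $c\in C_G(x)$, forcing $j=1$ and making the residual term disappear at once; for $K\le Z^*(P)\le Z_2(P)$ the conjugating element only normalizes $\langle x\rangle$, so $j\ne1$ is possible. The saving structural fact is that $K\le Z_2(P)$ forces $[K,x]\le Z(P)\cap\langle x\rangle$, so $K$ centralizes $x$ modulo $Z(P)$ and the discrepancy $j$ is tightly constrained. I would resolve the obstacle by descending to $L=N_G(\langle x\rangle)$, in which $C:=C_G(x)$ is normal with $L/C$ abelian (it embeds into $\mathrm{Aut}\langle x\rangle$), and by proving that $N\cap L$ induces on $\langle x\rangle$ the same group of automorphisms as $L$; this gives $[\langle x\rangle,L]=[\langle x\rangle,N\cap L]$, whence $x^{j-1}\in[\langle x\rangle,L]=[\langle x\rangle,N\cap L]\le P\cap N'$, which closes the argument. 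The remaining technical point, establishing this equality of induced actions, is handled by the same weak-closure argument applied to $p'$-elements of $L$, using once more that $K$ and its $L$-conjugates normalize $\langle x\rangle$. Once $P\cap G'\le P\cap N'$ is secured, the focal subgroup theorem and Tate's theorem, invoked as in the introduction, deliver $N/A^p(N)\cong G/A^p(G)$, completing the proof.
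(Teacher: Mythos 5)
Your proposal is correct in substance but follows a genuinely different route from the paper's. The paper argues by contradiction inside the transfer machinery itself: via Lemma \ref{basic} it produces a normal subgroup $M$ of index $p$ in $N=N_G(K)$, an element $u\in P\setminus M$ of minimal order, and a pretransfer $W\colon P\to P\cap N^x$ with $W(u)\notin P\cap M^x$; it then chooses $k\in K$ acting on the transversal with all orbits of length $p$, uses $k\in Z^*(P)$ to write $u^{k^{-1}}=u^n$ with $n\equiv 1 \bmod p$, and shows each $\langle k\rangle$-orbit contributes $su^zs^{-1}$ with $z\equiv 0\bmod p$, forcing $W(u)\in P\cap M^x$, a contradiction. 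You instead stay entirely on the fusion/focal-subgroup side, generalizing the classical proof of Gr\"{u}n's second theorem: reduce to $P\cap G'=P\cap N'$, observe that $K$ and $K^{g^{-1}}$ both lie in $L=N_G(\langle x\rangle)$ (replacing $C_G(x)$ from the $K\le Z(P)$ case), use Sylow plus weak closure to write $g=cn^{-1}$ with $c\in L$, $n\in N$, and then deal with the residual term $x^{j-1}=[x,c]$. Your route avoids all transfer computations beyond the focal subgroup theorem and makes transparent exactly where the hypothesis $Z^*(P)$ rather than $Z(P)$ enters; the paper's route shares its infrastructure (Lemma \ref{basic}) with the proofs of Theorems \ref{main theorem} and \ref{Z version} and needs no auxiliary structural claim about $L$.

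The one place where your argument is a sketch rather than a proof is the final claim that $N\cap L$ and $L$ induce the same automorphisms of $\langle x\rangle$, equivalently $L=(N\cap L)\,C_G(x)$. As stated, ``the same weak-closure argument applied to $p'$-elements'' does not go through element-by-element: given a $p'$-element $c\in L$, conjugating $K^c$ back into a Sylow $p$-subgroup of $L$ containing $K$ costs an element of $L$ whose action on $\langle x\rangle$ is uncontrolled, so you only recover the trivial statement $c\in(N\cap L)L$. The clean repair is a Frattini argument. Set $C=C_G(x)$ and choose $T\in Syl_p(L)$ with $K\le T$. Since $L/C$ is abelian, $TC\lhd L$, and $T\in Syl_p(TC)$, so $L=TC\cdot N_L(T)=C\,T\,N_L(T)$. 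Weak closure now forces both $T\le N$ and $N_L(T)\le N$: if $T\le P'\in Syl_p(G)$ and $m\in T$ or $m\in N_L(T)$, then $K$ and $K^m$ are two $G$-conjugates of $K$ lying in $P'$, hence equal (this is the same fact from problem 5C.6 of Isaacs that the paper itself invokes). Therefore $L=C\,(N\cap L)$, every $c\in L$ acts on $\langle x\rangle$ as some element of $N\cap L$ does, and $x^{j-1}\in[\langle x\rangle,N\cap L]\le P\cap N'$, exactly as you wanted. With this paragraph inserted, your proof is complete and valid.
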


The next corollary can also be  easily deduced by the means of Theorem \ref{Z version}.
\begin{corollary}
	Assume that for any two distinct Sylow $p$-subgroups $P$ and $Q$ of $G$, the inequality $|P\cap Q|\leq |Z^*(P)|$ is satisfied. Then $N_G(P)$ controls $p$-transfer in $G$.
\end{corollary}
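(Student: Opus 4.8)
The plan is to derive this statement directly from Theorem \ref{Z version} by showing that, under the hypothesis $|P\cap Q|\leq |Z^*(P)|$, the family of tame intersections appearing in Theorem \ref{Z version} is empty, so that its hypothesis is satisfied vacuously. First I fix a Sylow $p$-subgroup $P$ (the one named in the statement). Since all Sylow $p$-subgroups are conjugate, conjugation carries $N_G(P)$ to $N_G(P^x)$ and preserves the property of controlling $p$-transfer, so it is harmless to work with this particular $P$ and with the various $Q\in Syl_p(G)$.

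Next I examine exactly what Theorem \ref{Z version} demands: for every $Q$ such that $P\cap Q$ is a tame intersection with $Z^*(P)<P\cap Q<P$, the normalizer $N_G(P\cap Q)$ should be $p$-nilpotent. I claim that no such $Q$ can exist. Indeed, the relation $Z^*(P)<P\cap Q$ is a \emph{strict} containment of finite groups, which forces $|Z^*(P)|<|P\cap Q|$. On the other hand, the condition $P\cap Q<P$ gives $Q\neq P$ (otherwise $P\cap Q=P$), so $P$ and $Q$ are distinct Sylow $p$-subgroups and the hypothesis of the corollary applies, yielding $|P\cap Q|\leq |Z^*(P)|$. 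These two inequalities are incompatible, so there is no tame intersection with $Z^*(P)<P\cap Q<P$.

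Consequently the hypothesis of Theorem \ref{Z version} holds vacuously, and that theorem immediately gives that $N_G(P)$ controls $p$-transfer in $G$. I do not expect any genuine obstacle here; the only points meriting a word of care are to record that a strict inclusion of finite subgroups strictly increases the order, and to observe that the borderline case $P\cap Q=Z^*(P)$ is automatically excluded because there the strict inequality $Z^*(P)<P\cap Q$ already fails. In particular the argument uses only that the bound $|P\cap Q|\leq|Z^*(P)|$ prohibits strict containment, which is precisely what makes the conclusion an easy consequence of Theorem \ref{Z version}.
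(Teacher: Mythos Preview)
Your argument is correct and matches the paper's intended approach: the paper does not give a separate proof but simply states that the corollary ``can also be easily deduced by the means of Theorem \ref{Z version},'' and your vacuous-hypothesis argument is exactly how that deduction goes.
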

\begin{remark}
In above theorems, the assumption "$N_G(P\cap Q)$ is $p$-nilpotent" could be replaced with a weaker assumption "$N_G(P\cap Q)/C_G(P\cap Q)$ is a $p$-group". This can be observed with the proofs of Theorems \ref{main theorem} and \ref{Z version}.
\end{remark}

\section{Preliminaries}


	Let $H\leq G$ and $T=\{t_i \mid i=1,2\ldots, n \}$ be a right transversal for $H$ in $G$. The map $V:G\to H$ defined by $$V(g)=\prod_{i=1}^n t_ig(t_i.g)^{-1}  $$ is called  \textbf{a pretransfer map} from $G$ to $H$. When the order of the product is not needed to specify, we simply write $V(g)=\prod_{t\in T} tg(t.g)^{-1}$. Notice that the kernel of ``dot action" is $Core_G(H)$, and so $t.g=t$ for all $g\in Core_G(H)$. In the case that $G$ is a $p$-group, $Z(G/ Core_G(H))\neq 1$ whenever $H$ is a proper subgroup of $G$. If $x\in G$ such that $xCore_G(H)\in Z(G/ Core_G(H))$ of order $p$, then each $\langle x \rangle$-orbit has length $p$ when we consider the action of $\langle x \rangle$ on $T$. 
	
	Let $t_1,t_2\ldots,t_k$ be representatives of all distinct orbits of $\langle x \rangle$ on $T$. As $t.x$ and $tx$ represent the same right coset of $H$ in $G$ for each $t\in T$, the set $T^*=\{t_ix^j \mid i\in\{1,2,...,k\} \ and \ j\in \{0,1,...,p-1\}\}$ is also a right transversal for $H$ in $G$. Let $V^*$ be a pretransfer map constructed by using $T^*$. Since $V(u)\equiv V^*(u) \ mod \ H'$, we may replace $T$ with $T^*$ without loss of generality whenever such a situation occurs.
	
We denote all pretransfer maps with upper case letters and each corresponding lower case letter shows the corresponding transfer map.

\begin{theorem}\cite[Theorem 10.8]{1}\label{transivity}
Let $G$ be a group, and suppose that $H\leq K\leq G$. Let
$U:G\to K$, $W : K \to H$ and $V :G\to H$
be pretransfer maps. Then for all $g\in G$, we have $V(g) \equiv W(U(g)) \ mod \ H'$, that is, $v(g)=w(U(g))$.
\end{theorem}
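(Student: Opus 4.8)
The plan is to exploit the standard ``product transversal'' decomposition and to carry out all computations in the abelian quotient $H/H'$, where the order of factors is irrelevant. First I would fix a right transversal $T=\{t_i\}$ for $K$ in $G$ and a right transversal $S=\{s_j\}$ for $H$ in $K$. A short check shows that the set of products $ST=\{st \mid s\in S,\ t\in T\}$ is a right transversal for $H$ in $G$: every $g\in G$ lies in a unique coset $Kt$, and the corresponding element of $K$ lies in a unique coset $Hs$, which gives both existence and uniqueness of the representation $g\in Hst$. Since the value of a pretransfer map is independent of the chosen transversal modulo $H'$ (as recorded in the preliminaries), I may compute $V$ using the transversal $ST$.

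Next I would establish the key bookkeeping identity. For $t\in T$ write $u_t=tg(t.g)^{-1}\in K$, so that $U(g)=\prod_{t\in T}u_t$. For $s\in S$ set $w_{s,t}=su_t(s.u_t)^{-1}\in H$, where $s.u_t$ denotes the dot action of the element $u_t\in K$ on the transversal $S$. Manipulating $(st)g=s(tg)=su_t(t.g)$ and peeling off the $H$-part $w_{s,t}$ shows that the representative of $H(st)g$ in $ST$ is exactly $(s.u_t)(t.g)$, and hence the $(s,t)$-factor of $V(g)$ computed with $ST$ equals precisely $w_{s,t}$. This yields
$$V(g)=\prod_{(s,t)}su_t(s.u_t)^{-1},$$
a product of elements of $H$.

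Now I would pass to $H/H'$, where these factors commute, and regroup the product by fixing $t$ and letting $s$ range over $S$. For each fixed $t$ the inner product $\prod_{s\in S}su_t(s.u_t)^{-1}$ is by definition $W(u_t)$, so
$$V(g)\equiv\prod_{t\in T}W(u_t)\pmod{H'}.$$
Finally, since the induced transfer map $w\colon K\to H/H'$ is a homomorphism and therefore multiplicative, I obtain $\prod_{t\in T}W(u_t)\equiv W\!\left(\prod_{t\in T}u_t\right)=W(U(g))\pmod{H'}$; combining the two congruences gives $V(g)\equiv W(U(g))\pmod{H'}$, which is the assertion $v(g)=w(U(g))$.

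I expect the main obstacle to be the coset bookkeeping in the key identity: one must verify carefully that carrying $s$ across the factor $tg$ reindexes the $S$-part of the transversal exactly by the dot action $s\mapsto s.u_t$, and that the resulting product $(s.u_t)(t.g)$ really is the $ST$-representative of $H(st)g$. The remaining subtlety is that the regrouping of the double product is legitimate only after reducing modulo $H'$; working throughout in $H/H'$ is precisely what makes both the rearrangement and the concluding multiplicativity step valid.
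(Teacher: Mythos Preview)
Your argument is correct and is exactly the standard proof of transitivity of transfer: build the product transversal $ST$, identify each factor of $V(g)$ as $w_{s,t}=su_t(s.u_t)^{-1}$, regroup modulo $H'$, and then use that the induced transfer $w$ is a homomorphism. Note, however, that the paper does not give its own proof of this statement; it is quoted verbatim as \cite[Theorem 10.8]{1} from Isaacs' \emph{Finite Group Theory}, and the proof there is precisely the one you have outlined.
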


\begin{theorem}\label{mackeytransfer}\cite[Theorem 10.10]{1}  Let $X$ be a set of representatives for the $(H,K)$ double cosets in a group $G$, where $H$ and $K$ are subgroups of $G$. Let $V: G \to H$ be a pretransfer map, and for each element $x\in  X$, let $W_x :K\to K\cap H^x$ be a pretransfer map. Then for $k\in K$, we have $$V(k)\equiv \prod\limits_{x\in X}xW_x(k)x^{-1} \ mod \ H'.$$
\end{theorem}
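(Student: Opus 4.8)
The plan is to evaluate $V(k)$ using a right transversal of $H$ in $G$ that is adapted to the action of $K$ on the right cosets of $H$, and then to read off the Mackey product one $K$-orbit at a time. The enabling fact, recorded in the discussion preceding Theorem~\ref{transivity}, is that a pretransfer value depends on the chosen transversal only modulo the derived subgroup of the target; in particular, working modulo $H'$ I may freely reorder the factors of the product and swap one transversal for another. This transversal-invariance is what legitimizes the rearrangements below.

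First I would settle the combinatorics. Let $K$ act on the right on the set $\Omega=\{Hg : g\in G\}$ of right cosets of $H$. The orbits are exactly the families of right cosets lying in a single double coset $HxK$, so they are indexed by $x\in X$. For fixed $x$, the stabilizer in $K$ of the coset $Hx$ is $\{k\in K : Hxk=Hx\}=K\cap H^x$, so if $U_x$ is a right transversal of $K\cap H^x$ in $K$, then $\{Hxu : u\in U_x\}$ enumerates, without repetition, the cosets in the orbit of $Hx$. I therefore assemble a transversal $T$ of $H$ in $G$ by taking inside each orbit the representatives $xu$ with $u\in U_x$.

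With this choice, and since any $k\in K$ permutes $\Omega$ within orbits, grouping the factors of $V(k)$ by orbit gives
$$V(k)\equiv\prod_{x\in X}V_x(k)\pmod{H'},\qquad V_x(k)=\prod_{u\in U_x}(xu)\,k\,\bigl(t_{Hxuk}\bigr)^{-1},$$
where $t_{Hxuk}$ is the chosen representative of $Hxuk$. The decisive local step is to identify this representative. Writing $u\cdot k$ for the dot-action representative in $U_x$ of the coset $(K\cap H^x)uk$, I have $uk(u\cdot k)^{-1}\in K\cap H^x\leq H^x$, hence $x\,uk(u\cdot k)^{-1}x^{-1}\in H$, which forces $Hxuk=Hx(u\cdot k)$ and therefore $t_{Hxuk}=x(u\cdot k)$. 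Substituting and using that conjugation by $x$ is a homomorphism, each orbit contribution telescopes:
$$V_x(k)=\prod_{u\in U_x}x\,\bigl(uk(u\cdot k)^{-1}\bigr)\,x^{-1}=x\Bigl(\prod_{u\in U_x}uk(u\cdot k)^{-1}\Bigr)x^{-1}=x\,W_x(k)\,x^{-1}.$$
Combining the two displays gives the asserted formula, and since changing $U_x$ alters $W_x(k)$ only modulo $(K\cap H^x)'$ while $x(K\cap H^x)'x^{-1}\leq H'$, the result does not depend on which pretransfer maps $W_x$ are used.

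I expect the main obstacle to lie in the local step: checking that, under $u\mapsto xu$, the dot-action of $K$ on the orbit of $Hx$ corresponds to the dot-action of $K$ on $(K\cap H^x)\backslash K$, and verifying that each factor $x\,uk(u\cdot k)^{-1}x^{-1}$ actually lands in $H$ so that $V_x(k)\in H$ and the product over $x$ is meaningful modulo $H'$. By comparison, the orbit grouping and the independence from the choices of $W_x$ are routine once the transversal-invariance recalled above is in hand.
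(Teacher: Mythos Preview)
Your proof is correct and is essentially the standard argument: build the right transversal of $H$ in $G$ from the double-coset decomposition as $T=\{xu:x\in X,\ u\in U_x\}$ with $U_x$ a right transversal of $K\cap H^x$ in $K$, observe that the dot-action of $k\in K$ on $T$ preserves each block $xU_x$ and satisfies $(xu)\cdot k=x(u\cdot k)$, and then read off $V(k)$ blockwise as $xW_x(k)x^{-1}$ modulo $H'$. Note, however, that the paper does not supply its own proof of this statement; it is quoted verbatim as \cite[Theorem~10.10]{1}, and your argument is precisely the one given there.
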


Now we give a technical lemma, which is essentially the method used in the proof of Yoshida's theorem (see proof of Theorem 10.1 in \cite{1}). For the sake completeness, we give the proof of this lemma here.

\begin{lemma} \label{basic}
Let $G$ be a group and, let $P\in Syl_p(G)$ and $N_G(P)\leq N$. Suppose that $N$ does not control $p$-transfer in $G$ and let $X$ be a set of representatives for the $(N,P)$ double cosets in $G$, which contains the identity $e$. Then the following hold:

\begin{enumerate}[label=(\alph*)]
	\item There exists a normal subgroup $M$ of $N$ of index $p$ such that $V(G)\subseteq M$ for every pretransfer map $V$ from $G$ to $N$.
	\item For each $u\in P\setminus M$, there exists a nonidentity $x\in X$ such that $W(u)\notin P\cap M^x$ where $W$ is a pretransfer map from $P$ to $P\cap N^x$.
	\item For the $x$ in part (b), we have $P\cap N^x<P$ and  $|P\cap N^x:P\cap M^x|=p$.
\end{enumerate}
	\end{lemma}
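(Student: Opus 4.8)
The plan is to prove the three parts in sequence, using the transfer machinery set up in the preliminaries. Throughout I would fix the hypothesis that $N$ does not control $p$-transfer in $G$, which by Tate's theorem (cited in the introduction) means $N/O^p(N)\not\cong G/O^p(G)$, and in particular the transfer map from $G$ into $N$ is not surjective onto $N/O^p(N)$.

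For part (a), I would start from the fact that $|G:N|$ is coprime to $p$ (since $N\geq N_G(P)\geq P$), so the transfer map $v:G\to N/N'$ restricted to a Sylow $p$-subgroup $P$ of $N$ controls the $p$-part of $G/G'$. Because $N$ fails to control $p$-transfer, the image $v(G)$ must be a proper subgroup of the $p$-part of $N/O^p(N)$. Concretely, I would produce a surjection $N\to \ZZ_p$ whose kernel $M$ is a normal subgroup of index $p$ containing the image of the pretransfer, and I would argue that this $M$ can be chosen uniformly for every pretransfer map $V$ since any two pretransfer maps agree modulo $N'$ by the discussion preceding Theorem \ref{transivity}, and $N'\leq M$ automatically as $N/M$ is abelian. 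The existence of such an $M$ is essentially the assertion that $G/O^p(G)$ and $N/O^p(N)$ differ, detected at a single quotient of order $p$.

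For part (b), I would invoke the Mackey-style decomposition of Theorem \ref{mackeytransfer} with $H=N$ and $K=P$. This gives, for each $u\in P$,
\[
V(u)\equiv \prod_{x\in X} x\,W_x(u)\,x^{-1} \pmod{N'},
\]
where $W_x:P\to P\cap N^x$ is a pretransfer map. Since $e\in X$ and $W_e$ is just the identity-indexed term computing the transfer of $P$ into $P\cap N=P$, the $x=e$ contribution is $u$ itself up to $N'$. Now suppose for contradiction that for a given $u\in P\setminus M$ every nonidentity $x\in X$ satisfied $W_x(u)\in P\cap M^x$. Then conjugating back, each term $xW_x(u)x^{-1}$ for $x\neq e$ would lie in $M$ (using that $M^x\cap\text{stuff}$ conjugates into $M$), so the whole product, and hence $V(u)$, would be congruent to $u$ modulo $M$; but by part (a) $V(u)\in M$, forcing $u\in M$, a contradiction. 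This produces the desired nonidentity $x$ and the failure of containment $W(u)\notin P\cap M^x$.

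For part (c), I would first note that $P\cap N^x<P$ must be proper: if $P\cap N^x=P$ then $P\leq N^x$, so $P^{x^{-1}}\leq N$, and since $N$ controls its own Sylow normalizer and $X$ indexes double cosets, $x$ would collapse into $N=N e P$, contradicting $x\neq e$ being a genuine double coset representative. Having $P\cap N^x<P$, the index claim $|P\cap N^x:P\cap M^x|=p$ follows because $M^x$ is normal of index $p$ in $N^x$, so $P\cap M^x=(P\cap N^x)\cap M^x$ is the intersection of $P\cap N^x$ with a subgroup of index $p$ in $N^x$; this index is either $1$ or $p$, and part (b) rules out index $1$ since $W(u)\in P\cap N^x$ but $W(u)\notin P\cap M^x$ forces the two groups to differ. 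I expect the main obstacle to be the bookkeeping in part (b): carefully tracking how the individual Mackey terms $xW_x(u)x^{-1}$ land inside $M$ under the assumption that $W_x(u)\in P\cap M^x$, since this requires the compatibility between the normal subgroup $M\trianglelefteq N$, its conjugate $M^x\trianglelefteq N^x$, and the index-$p$ structure, and one must be careful that the product over all $x$ genuinely reduces modulo $M$ to $u$ alone rather than accumulating stray factors.
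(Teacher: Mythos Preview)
Your proposal is correct and follows essentially the same route as the paper. The paper dispatches (a) by citing \cite[Lemma~10.11]{1}, which is precisely the content you sketch, and your arguments for (b) and (c) coincide with the paper's up to a direct-versus-contrapositive reformulation. The bookkeeping you flag in (b) is not an obstacle: if $W_x(u)\in M^x$ then $xW_x(u)x^{-1}\in M$ immediately by conjugation, so under your contradiction hypothesis the product over $x\neq e$ already lies in $M$ and $V(u)\equiv u\pmod M$ follows with no stray factors. In (c) your phrase ``$x$ would collapse into $NeP$'' should be made precise via Sylow's theorem in $N$, exactly as the paper does: from $P^{x^{-1}}\leq N$ pick $y\in N$ with $P^{x^{-1}y}=P$, so $x^{-1}y\in N_G(P)\leq N$ and hence $x\in N$, contradicting $NxP\neq NeP$.
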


\begin{proof}[\textbf{Proof}]
	\begin{enumerate}[label=(\alph*)]
		\item It follows by (\cite{1}, Lemma 10.11).
		\item Let $u\in P\setminus M$. Let $W_x$ be a pretransfer map from $P$ to $P\cap N^x$ for each $x\in X$. Then we have
		$$V(u)\equiv \prod_{x\in X}xW_x(u)x^{-1} \ mod \ N'$$
		by Theorem \ref{mackeytransfer}.
		Since $N'\leq M$ and $V(u)\in M$, we get $$\prod_{x\in X}xW_x(u)x^{-1}\in M.$$ Notice that for $x=e$, $W_e:P\to P$ and $W_e(u)=u=eW_e(u)e^{-1}\notin M$. Thus, there also exists $e\neq x\in X$ such that $xW_x(u)x^{-1}\notin M$. Set $W_x=W$. Then we get $W(u)\in P\cap N^x\setminus P\cap M^x.$ 
		
		\item Set $R=P\cap N^x$ and $Q= P\cap M^x$. If $R=P$ then $P^{x^{-1}}\leq N$, and hence there exists $y\in N$ such that $P^{x^{-1}y}=P$. Since $x^{-1}y\in N_G(P)\leq N$, we get $x\in N$. This is not possible as $NxP=NeP$ and $x\neq e$. It follows that that $R<P$. Note that $R\neq Q$ by part $(b)$. Moreover, the inequality $1<|R:Q|\leq |N^x:M^x|=p$ forces that $|R:Q|=p$.
	\end{enumerate}
\end{proof}

\section{Main Results}

The following lemma serves as the key tool in proving our main theorems since it enables us to use induction in the proof ``control $p$-transfer theorems". Throughout the section, $G$ is a group and $P$ is a Sylow $p$-subgroup of $G$ for a prime $p$ dividing the order of $G$.

\begin{lemma}\label{main lemma}
	 Let $N_G(P)\leq N\leq G$, $Z\leq P$ and $Z\lhd G$. Assume that $N/Z$ controls $p$-transfer in $G/Z$ and that one of the following holds:
	\begin{enumerate}[label=(\alph*)]
		\item  $[Z,g,\ldots,g]_{p-1}\leq \Phi(Z)$ for all $g\in P$. 
		\item $Z\leq \Phi(P)$. 
	\end{enumerate}
	Then $N$ controls $p$-transfer in $G$. 
\end{lemma}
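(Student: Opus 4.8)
The plan is to argue by contradiction, entering through Lemma \ref{basic} and then reducing modulo $Z$ to play the hypothesis on $G/Z$ against it. So suppose $N$ does not control $p$-transfer in $G$; by Lemma \ref{basic}(a) there is $M\lhd N$ with $|N:M|=p$ and $V(G)\subseteq M$ for every pretransfer $V\colon G\to N$. Since $Z\le P\le N$, I would first record that the pretransfer commutes with reduction modulo $Z$: a right transversal $T$ of $N$ in $G$ maps to a right transversal $\bar T$ of $\bar N:=N/Z$ in $\bar G:=G/Z$ (with $|\bar G:\bar N|=|G:N|$ as $Z\le N$), the dot-actions correspond, and hence $\bar V(\bar g)=\overline{V(g)}$. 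I then dispose of the case $Z\le M$: here $\bar M:=M/Z\lhd\bar N$ has index $p$, and $\bar V(\bar G)=\overline{V(G)}\subseteq\bar M$ for every pretransfer $\bar V\colon\bar G\to\bar N$ (any two agree modulo $\bar N'\le\bar M$). By \cite[Lemma 10.11]{1} this means $\bar N$ does \emph{not} control $p$-transfer in $\bar G$, contradicting the hypothesis. This already settles case (b): since $P\in Syl_p(N)$ and $|N:M|=p$, the subgroup $P\cap M$ is maximal in $P$, so $\Phi(P)\le P\cap M\le M$, and thus $Z\le\Phi(P)\le M$ by (b).

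It remains to handle case (a) when $Z\not\le M$. Because $Z\lhd G$ and $Z\le P$, I would pick $u\in Z\setminus M$ and apply Lemma \ref{basic}(b),(c) to obtain a nonidentity double-coset representative $x$ and a pretransfer $W\colon P\to R$, where $R=P\cap N^x$, with $W(u)\notin Q:=P\cap M^x$, $R<P$, $Q\lhd R$ and $|R:Q|=p$. Normality of $Z$ gives $Z=Z^x\le N^x$, so $Z\le R$; and $Z\le M^x$ would force $Z\le M$, so $Z\not\le Q$. Hence $R=ZQ$, the subgroup $Z_1:=Z\cap Q$ has index $p$ in $Z$, and, as $R/Q$ is abelian of exponent $p$, $\Phi(Z)=Z'Z^p\le R'R^p\le Q$, whence $\Phi(Z)\le Z_1$. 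The key structural point is that $u\in Z\le R$ together with $Z\lhd P$ makes $Rgu=Rg$ for every $g$, so every $\gen u$-orbit on $R\backslash P$ is trivial and, for any right transversal $T$ of $R$ in $P$,
$$ W(u)\equiv \prod_{t\in T} tut^{-1}\pmod{R'}. $$
I would then pass to the $\FF_p[P]$-module $\bar Z:=Z/\Phi(Z)$ with the conjugation action $\sigma$, so that the image of $\prod_{t}tut^{-1}$ in $\bar Z$ equals $S\,\tilde u$, where $S:=\sum_{t\in T}\sigma(t)$.

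The heart of the matter is to show $S=0$ on $\bar Z$, and this is exactly where hypothesis (a) is used. Choosing $R\lhd R_1\le P$ with $R_1=R\gen y$ and $|R_1:R|=p$, and taking $T=\{\,y^{j}s:0\le j\le p-1,\ s\in T_1\,\}$ for a right transversal $T_1$ of $R_1$ in $P$, one gets $S=\bigl(\sum_{j=0}^{p-1}\sigma(y)^{j}\bigr)\sum_{s}\sigma(s)$. Writing $A=\sigma(y)=1+B$, hypothesis (a) for $g=y$ says $B^{p-1}=0$ on $\bar Z$, and a char-$p$ binomial computation gives
$$ \sum_{j=0}^{p-1}A^{j}=\sum_{k=0}^{p-1}\binom{p}{k+1}B^{k}=0, $$
since the top term dies as $B^{p-1}=0$ and all others die as $\binom{p}{k+1}\equiv0\pmod p$ for $1\le k+1\le p-1$. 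Thus $S=0$, so $\prod_{t}tut^{-1}\in\Phi(Z)\le Q$; as also $R'\le Q$, we conclude $W(u)\in Q$, contradicting $W(u)\notin Q$. I expect this char-$p$ vanishing of the transversal sum — available only after replacing the possibly non-$P$-invariant $Z_1$ by $\Phi(Z)$ so that an honest $\FF_p[P]$-module is in play — to be the main obstacle; once it is established, both cases close.
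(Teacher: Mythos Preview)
Your argument is correct. The overall architecture matches the paper's: argue by contradiction through Lemma~\ref{basic}, first show $Z\not\le M$ by reducing mod $Z$ (this is exactly the paper's Lemma~\ref{u,Z}), dispose of case~(b) via $\Phi(P)\le P\cap M$, and then for~(a) compute $W(u)$ on a well-chosen transversal.

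Where you diverge is in the case~(a) computation. The paper works modulo $\Phi(C)$ with $C=\mathrm{Core}_P(R)$: it picks $n$ with $nC$ central of order $p$ in $P/C$, groups the whole transversal into $\langle n\rangle$-orbits, and expands $(nu)^p$ by a Hall--Petrescu-type commutator identity, using $[u,n,\ldots,n]_{p-1}\in\Phi(Z)\le\Phi(C)$ and $p\mid\binom{p}{i}$ to land each orbit's contribution in $\Phi(C)\le Q$. You instead linearize: since every factor $tut^{-1}$ already lies in $Z$, you pass to the $\FF_p[P]$-module $\bar Z=Z/\Phi(Z)$ and reduce to showing the operator $\sum_{t\in T}\sigma(t)$ vanishes on $\bar Z$; after factoring through one step $R\lhd R\langle y\rangle$ this becomes the identity $\sum_{j=0}^{p-1}(1+B)^j=\sum_k\binom{p}{k+1}B^k$, which dies because $B^{p-1}=0$ (hypothesis~(a)) and the lower binomials are $\equiv 0\pmod p$. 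Your route is cleaner and sidesteps the noncommutative commutator expansion; the paper's route stays entirely in the group and incidentally lands $W(u)$ in the slightly smaller $\Phi(C)$, though both arguments only need $W(u)\in Q$.
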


We need the following lemma in the proof of Lemma \ref{main lemma}.
\begin{lemma}\label{u,Z}
 Let $N_G(P)\leq N\leq G$, $Z\leq P$ and $Z\lhd G$. Assume that $N$ does not control $p$-transfer in $G$ and $N/Z$ controls $p$-transfer in $G/Z$. Then $Z \nsubseteq M$ and  we have $W(u) \in P \cap N^x \setminus P\cap M^x$ for each $u\in Z\setminus M $ where $W,M$ and $x$ are as in Lemma \ref{basic}.
\end{lemma}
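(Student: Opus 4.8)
The plan is to isolate the genuinely new content of the statement, namely the assertion $Z\nsubseteq M$, and then obtain the second conclusion as an immediate consequence of Lemma \ref{basic}(b). Indeed, once we know $Z\nsubseteq M$, the set $Z\setminus M$ is nonempty and, since $Z\leq P$, it is contained in $P\setminus M$. Hence every $u\in Z\setminus M$ is in particular an element of $P\setminus M$, so Lemma \ref{basic}(b) applies verbatim: it furnishes a nonidentity $x\in X$ and a pretransfer map $W\colon P\to P\cap N^x$ with $W(u)\notin P\cap M^x$. As $W$ takes values in $P\cap N^x$, this gives $W(u)\in (P\cap N^x)\setminus(P\cap M^x)$, which is exactly the desired conclusion.

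It therefore remains to prove $Z\nsubseteq M$, and this is where the hypothesis that $N/Z$ controls $p$-transfer in $G/Z$ enters. The key observation is that the transfer is compatible with passing to the quotient by $Z$: since $Z\lhd G$ and $Z\leq P\leq N_G(P)\leq N$, any right transversal $T$ for $N$ in $G$ projects to a right transversal $\{tZ\mid t\in T\}$ for $N/Z$ in $G/Z$, and a short computation with the dot action (using that $(tZ).(gZ)=(t.g)Z$) shows that the associated pretransfer maps satisfy $\overline V(gZ)=V(g)Z$ for all $g\in G$, where $V\colon G\to N$ and $\overline V\colon G/Z\to N/Z$ are the pretransfer maps built from $T$ and its image. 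Thus $\overline V$ is a genuine pretransfer map for $G/Z\to N/Z$ induced by $V$.

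I would then argue by contradiction. Suppose $Z\subseteq M$. Then $M/Z$ is a normal subgroup of $N/Z$ of index $|N:M|=p$. By Lemma \ref{basic}(a) we have $V(g)\in M$ for every $g\in G$, whence $\overline V(gZ)=V(g)Z\in M/Z$ for all $g$; that is, $\overline V(G/Z)\subseteq M/Z$. But the existence of an index-$p$ normal subgroup of $N/Z$ containing the image of a pretransfer map is equivalent to the failure of $N/Z$ to control $p$-transfer in $G/Z$: this is Lemma \ref{basic}(a) applied to $G/Z$ together with its converse, both resting on the fact that controlling $p$-transfer amounts to surjectivity of the transfer onto the largest abelian $p$-quotient (every index-$p$ normal subgroup automatically contains the relevant commutator-type subgroup). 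This contradicts the hypothesis, so $Z\nsubseteq M$.

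The only real obstacle is the second step: one must set up the correspondence between pretransfer maps of $G\to N$ and of $G/Z\to N/Z$ cleanly, and then invoke \emph{in the correct direction} the equivalence between controlling $p$-transfer and the non-containment of the transfer image in an index-$p$ subgroup. Everything else is bookkeeping, namely verifying that $M/Z$ is well defined and of index $p$ (which uses $Z\lhd G$ and the standing assumption $Z\subseteq M$), and the purely set-theoretic reduction of the second assertion to Lemma \ref{basic}(b).
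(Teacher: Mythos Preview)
Your proposal is correct and follows essentially the same route as the paper: reduce to showing $Z\nsubseteq M$ via the compatibility of pretransfer maps with the quotient by $Z$, then invoke Lemma~\ref{basic}(b) for each $u\in Z\setminus M\subseteq P\setminus M$. The only difference is that where you appeal to ``the converse of Lemma~\ref{basic}(a)'', the paper spells this step out explicitly: it composes $\overline V$ with a pretransfer $W\colon\overline N\to\overline P$, uses Theorem~\ref{transivity} to identify $w\circ\overline V$ with the transfer $\overline G\to\overline P$, and observes that $\ker(w)=A^p(\overline N)\leq\overline M$ forces $w(\overline V(\overline G))\leq w(\overline M)<w(\overline N)$, whence $|\overline G:A^p(\overline G)|\neq|\overline N:A^p(\overline N)|$.
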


\begin{proof}[\textbf{Proof}]
	Set $G/Z=\overline G$. Let $V$ be a pretransfer map from $G$ to $N$. Let $T$ be a right transversal set used for constructing $V$. It follows that there exist a normal subgroup $M$ of $N$ with index $p$ such that $V(G)\subseteq M$ by Lemma \ref{basic}(a).
	
	Now we claim that $Z\nsubseteq M$. Assume to the contrary. Notice that the set $\overline T=\{\overline t\mid t\in T \}$ is a right transversal set for $\overline N$ in $\overline G$. Thus if we construct a pretransfer map $\overline V$ by using $\overline T$, then $\overline V(\overline g)=\overline {V(g)}$. It follows that $\overline V(\overline G)=\overline{V(G)}\subseteq \overline M \lhd \overline N$. Let $W$ be a pretransfer map from $\overline N$ to $\overline P$. Note that $ker(w)=A^p(\overline N) \leq \overline  M$ as $|\overline N : \overline M|=p$, and hence $w(\overline M)<w(\overline N)$. It then follows that $w(\overline V(\overline G))<w(\overline N)$. Since $w\circ \overline V$ is the transfer map from $\overline G$ to $\overline P$ by Theorem \ref{transivity}, we get $|\overline G:A^p(\overline G)|\neq |\overline N:A^p(\overline N)|$, which contradicts the hypothesis. Thus there exists $u\in Z$ such that $u\in N\setminus M$. Then we have $W(u) \in P \cap N^x \setminus P\cap M^x$ for each $u\in Z\setminus M $
	by Lemma \ref{basic}(b).
\end{proof}

\begin{proof}[\textbf{Proof of Lemma \ref{main lemma}}]
	Assume that $N$ does not control $p$-transfer in $G$. We derive contradiction for both parts.
		
	First assume that $(b)$ holds, that is, $Z\leq \Phi(P)$. Note that $|P:P\cap M|=p$, and so $Z\leq \Phi(P)\leq M\cap P$. However, this is not possible by Lemma \ref{u,Z}. This contradiction shows that $N$ controls $p$-transfer in $G$ when $(b)$ holds.
	
	Now assume that $(a)$ holds. Let $X$ be a set of representatives for the $(N,P)$ double cosets in $G$, which contains the identity $e$.
	By Lemma \ref{basic}(b), we have a pretransfer $W:P\to P\cap N^x$ such that $W(u)\notin P\cap M^x$ for some nonidentity $x\in X$. Set $R=P\cap N^x$ and $Q=P\cap M^x$.
	
	Now let $S$ be a right transversal set for $R$ in $P$ used for constructing $W$ so that we have $W(u)=\prod_{s\in S}su(s.u)^{-1}.$ Since $u\in Z\leq Core_P(R)$, we have $(s.u)=s$ for all $s\in S$. Thus we get $W(u)=\prod_{s\in S}sus^{-1}$.

	Set $C= Core_P(R)$. Since $R<P$ by Lemma \ref{basic}(c), $C$ is also proper in $P$. So we see that $Z(P/ C)\neq 1$. Now choose $n\in P$ such that $n C\in  Z(P/ C)$ of order $p$. Then each $\langle n \rangle$-orbit has length $p$. Let $s_1,s_2\ldots,s_k$ be representatives of all distinct orbits of $\langle n \rangle$ on $S$. Without loss of generality, we can suppose that $S=\{s_in^j \mid i\in\{1,2,...,k\} \ and \ j\in \{0,1,...,p-1\}\}$. Now we compute the contribution of a single $\langle n \rangle$-orbit to $W(u)$. Fix $s\in S$.
	$$(snun^{-1}s^{-1})(sn^2un^{-2}s^{-1})\ldots (sn^{p-1}un^{-p+1}s^{-1})(sus^{-1})=s(nu)^{p-1}n^{-p+1}us^{-1}.$$ 
	We have $s(nu)^{p-1}n^{-p+1}us^{-1}=(s(nu)^{p}s^{-1})(su^{-1}n^{-p}us^{-1}).$ Set $H=\langle n,u\rangle$. Due to the fact that $|\langle n\rangle C:C|=p$, we have $H'\leq C$. Note that $u\in Z\leq C$, and so $$[H',u]\equiv 1 \ mod \ \Phi(C).$$
	We can expand the power of the product as in the following form  $$(nu)^p\equiv (n^pu^p)[u,n]^{p\choose 2}[u,n,n]^{p\choose 3}...[u,n,...,n]_{p-2}^p[u,n,...,n]_{p-1}  \ mod \ \Phi(C)$$
	due to the previous congruence.

	As $C\lhd P$, we observe that $s[u,n,...,n]_is^{-1}\in C$ for $i=1,...,p-1$, and so $(s[u,n,...,n]_is^{-1})^p\in \Phi(C)$ for $i=1,...,p-1$.  By using the fact that $p\choose i+1$ is divisible by $p$ for $i=1,\ldots, p-2$, we see that $$(s[u,n,...,n]_{i}s^{-1})^{p\choose i+1}\in \Phi(C) \ for \ i=1, \ldots, p-2. $$
	Note also that $[u,n,...,n]_{p-1}\in \Phi(Z)\leq \Phi(C)$ by hypothesis, and so we get that $s[u,n,...,n]_{p-1}s^{-1}\in \Phi(C)$ since $\Phi(C)\lhd P$.
	As a consequence, we obtain that $$s(nu)^ps^{-1}\equiv (sn^ps^{-1})(su^ps^{-1}) \equiv sn^ps^{-1} \ mod \ \Phi(C).$$
	
	It then follows that $$(s(nu)^{p}s^{-1})(su^{-1}n^{-p}us^{-1})\equiv (sn^ps^{-1})(su^{-1}n^{-p}us^{-1}) \equiv s[n^{-p},u]s^{-1} \equiv 1 \ mod \ \Phi(C) .$$
	
	We only need to explain why the last congruence holds: Since both $n^{-p}$ and $u$ are elements of $C$, we see that $[n^{-p},u]\in \Phi(C)$. It follows that $s[n^{-p},u]s^{-1}\in \Phi(C)$ due to the normality of $\Phi(C)$ in $P$. Then $W(u)\in \Phi(C)$ as the chosen $\langle n \rangle$-orbit is arbitrary. Since $|R:Q|=p$ by Lemma \ref{basic}(c), the containment
	 $\Phi(C)\leq \Phi(R)\leq Q$ holds. As a consequence, $W(u)\in Q$. This contradiction completes the proof.
\end{proof}

\begin{remark}
	In the proofs of many $p$-nilpotency theorems, the minimal counter example $G$ is a $p$-soluble group such that $O_{p'}(G)=1$ and $G/O_p(G)$ is $p$-nilpotent. Lemma \ref{main lemma}(a) guarantees the $p$-nilpotency of $G$ if $[O_p(G),g,\ldots,g]_{p-1}\leq \Phi(O_p(G))$ for all $g\in P$. In particular if $O_p(G)\leq Z_{p-1}(P)$ then the $p$-nilpotency of $G$ follows. This bound seems to be best possible since in the symmetric group $S_4$, $O_2(S_4)\leq Z_2(P)$ and $O_2(G)\nleq Z(P)$. Even if  $S_4/O_2(S_4)$ is $2$-nilpotent, $S_4$ fails to be $2$-nilpotent.
	
	It is well known that if $G/Z$ is $p$-nilpotent and $Z\leq \Phi(P)$ then $G$ is $p$-nilpotent. Lemma \ref{main lemma}(b) generalizes this particular case by stating that if $\overline G/O^p(\overline G)\cong \overline  N/O^p(\overline N)$ then  $ G/O^p( G)\cong   N/O^p(N)$ where $\overline G=G/Z$ and $Z\leq \Phi(P)$.
	
	We also should note that in Lemma \ref{main lemma}, we prove little more than what we need here as we see that it may have other applications too.
\end{remark}

\begin{proposition}\label{main prop}

	Let $G$ be a group and $P\in Syl_p(G)$. Assume that for every characteristic subgroup of $P$ that contains  $Z_{p-1}(P)$ is weakly closed in $P$. Then $N_G(P)$ controls $p$-transfer.
\end{proposition}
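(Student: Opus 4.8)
The plan is to run an induction on $|G|$ and reduce, via Lemma \ref{main lemma}, to a situation where some normal subgroup $Z\lhd G$ with $Z\le P$ lets us pass to $G/Z$ and apply the induction hypothesis. Assume $N:=N_G(P)$ does not control $p$-transfer in $G$; I want a contradiction. The natural candidate for $Z$ is a suitable term related to $Z_{p-1}(P)$ together with the weak-closure hypothesis, because Lemma \ref{main lemma} requires $Z\lhd G$, and the only way a subgroup of $P$ becomes normal in all of $G$ is through weak closure (a weakly closed subgroup of $P$ is normal in $N_G(P)$ and, being the unique $G$-conjugate inside $P$, its normal closure behaves well). So first I would identify a characteristic subgroup $K$ of $P$ containing $Z_{p-1}(P)$ which, by hypothesis, is weakly closed in $P$; the smallest such is $K=Z_{p-1}(P)$ itself if it happens to be weakly closed, but in general I take $K$ to be the intersection (or an appropriate chosen term) of the weakly closed characteristic subgroups above $Z_{p-1}(P)$.

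The key structural step is to produce a nontrivial $Z\lhd G$ with $Z\le P$ satisfying one of the two hypotheses (a) or (b) of Lemma \ref{main lemma}. Here I would exploit weak closure: if $K\le P$ is weakly closed and characteristic, then by a standard argument $K$ is normalized by $N_G(P)$, and one shows that the relevant $G$-invariant object to quotient by is $Z=O_p(\langle K^G\rangle)\cap P$ or, more cleanly, a minimal normal-in-$G$ subgroup sitting inside $Z_{p-1}(P)$. The weak-closure assumption on \emph{every} characteristic subgroup above $Z_{p-1}(P)$ is exactly what forces enough rigidity: any characteristic subgroup $C$ of $P$ with $Z_{p-1}(P)\le C$ is the unique conjugate of itself in $P$, so its fusion is controlled and $N_G(C)$ will contain a Sylow subgroup in a tame way. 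I would then check hypothesis (a) of Lemma \ref{main lemma}: for $Z\le Z_{p-1}(P)$ we have $[Z,g,\dots,g]_{p-1}\le [Z_{p-1}(P),g,\dots,g]_{p-1}=1\le \Phi(Z)$ for all $g\in P$, since iterated commutators of length $p-1$ into $Z_{p-1}(P)$ land in $Z_0(P)=1$. This is the clean verification that makes the $Z_{p-1}(P)$ bound natural.

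With such a $Z$ in hand, the induction proceeds: in $G/Z$ the image $\overline P=PZ/Z$ is Sylow, and the weak-closure hypothesis descends to $\overline P$ (images of weakly closed characteristic subgroups remain weakly closed characteristic, and $Z_{p-1}(\overline P)$ dominates the image of $Z_{p-1}(P)$), so by induction $\overline N=N/Z$ controls $p$-transfer in $\overline G=G/Z$. Lemma \ref{main lemma}(a) then upgrades this to: $N$ controls $p$-transfer in $G$, the desired contradiction. The base case is $Z_{p-1}(P)=P$, i.e.\ the class of $P$ is at most $p-1$, which is precisely Hall--Wielandt and may be cited.

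I expect the main obstacle to be the \textbf{manufacture of a genuinely $G$-normal $Z\le P$} from the weak-closure data. Weak closure gives normality in $N_G(P)$ for free, but promoting this to normality in all of $G$ is the crux: one must argue that a weakly closed subgroup $K$ of $P$ has $N_G(K)$ controlling enough fusion that the intersection $\bigcap_{g\in G}K^g$ (or the core of $K$ in $G$) is nontrivial and lies inside $Z_{p-1}(P)$. The descent of the hypothesis to $G/Z$ — verifying that \emph{every} characteristic subgroup of $\overline P$ above $Z_{p-1}(\overline P)$ is again weakly closed — is the second delicate point, since characteristic subgroups of $\overline P$ need not all lift to characteristic subgroups of $P$; I would handle this by pulling back to full preimages and using that $Z\le \Phi(P)$ or $Z\le Z_{p-1}(P)$ keeps the correspondence tight.
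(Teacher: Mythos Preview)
Your framework (induction on $|G|$, Lemma~\ref{main lemma}(a), passage to $G/Z$) matches the paper's, and your observation that $Z\le Z_{p-1}(P)$ makes hypothesis (a) of Lemma~\ref{main lemma} automatic is correct. The genuine gap is precisely the step you yourself flag as the main obstacle: you attempt to \emph{construct} a $G$-normal $Z\le P$ directly from weak-closure data via cores or normal closures, and you have no actual mechanism for this.

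The paper sidesteps the obstacle entirely. Take $Z=Z_{p-1}(P)$; it is characteristic in $P$ and trivially contains $Z_{p-1}(P)$, so by hypothesis it is weakly closed in $P$. A classical result (Theorem 14.4.2 in Hall's \emph{Theory of Groups}, a Hall--Wielandt type statement for weakly closed subgroups lying in $Z_{p-1}(P)$) then gives that $N_G(Z)$ already controls $p$-transfer in $G$. Now split into two cases. If $N_G(Z)<G$, apply the induction hypothesis to $N_G(Z)$ (same $P$, hypotheses inherited) to get that $N_G(P)$ controls $p$-transfer in $N_G(Z)$, hence in $G$ by transitivity of controlling $p$-transfer. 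Otherwise $N_G(Z)=G$, i.e.\ $Z\lhd G$, and only in this branch do you pass to $G/Z$ and invoke Lemma~\ref{main lemma}(a). So $G$-normality of $Z$ is never manufactured; it is one branch of a dichotomy, and the other branch uses induction on a proper \emph{subgroup} rather than a proper quotient. Your second worry, descent of the hypothesis to $G/Z$, is not delicate: since $Z$ is characteristic in $P$, the full preimage in $P$ of any characteristic subgroup of $P/Z$ is characteristic in $P$ and contains $Z=Z_{p-1}(P)$, hence is weakly closed by hypothesis, and therefore its image in $P/Z$ is weakly closed; no comparison between $Z_{p-1}(P/Z)$ and the image of $Z_{p-1}(P)$ is needed.
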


\begin{proof}[Proof]
	We proceed by induction on the order $G$. Let $Z=Z_{p-1}(P)$. Then $N_G(Z)$ controls $p$-transfer in $G$ by  (\cite{4}, Theorem 14.4.2). If $N_G(Z)<G$ then $N_G(P)$ controls $p$-transfer with respect to group $N_G(Z)$ by induction applied to $N_G(Z)$. It follows that $P\cap G'=P\cap N_G(Z)'=P\cap N_G(P)'$, that is, $N_G(P)$ controls $p$-transfer in $G$.
	
	 Therefore we may assume $Z\lhd G$. It is easy to see that $G/Z$ satisfies the hypothesis of the proposition, and hence we get $N_{G/Z}(P/Z)=N_G(P)/Z$ controls $p$-transfer in $G/Z$ by induction applied to $G/Z$. Then the result follows by Lemma \ref{main lemma}(a).
\end{proof}

\begin{remark}
In the above proposition, the assumption that every characteristic subgroup containing $Z_{p-1}(P)$ is weakly closed can be weakened to $Z_{k(p-1)}(P)$ is weakly closed for each $k=1,...,n$ where $Z_{n(p-1)}(P)=P$. Yet we shall not need this fact.
\end{remark}
After Proposition \ref{main prop}, it is natural to ask the following question.
\begin{question}
Does a Sylow $p$-subgroup $P$ of a group $G$ have a  single characteristic subgroup whose being weakly closed in $P$ is sufficient to conclude that $N_G(P)$ controls $p$-transfer in $G$?
\end{question}

\begin{proof}[\textbf{Proof of Theorem \ref{main theorem}}]
	Let $Z_{p-1}\leq C$ be a characteristic subgroup of $P$. We claim that $C$ is normal in each Sylow subgroup of $G$ that contains $C$. Assume  the contrary and let $Q\in Syl_p(G)$ such that $C\leq Q$ and $N_Q(C)<Q$. There exists $x\in N_G(C)$ such that $N_Q(C)^x=N_{Q^x}(C)\leq P$, and hence $N_{Q^x}(C)\leq P\cap Q^x$.
	
	Set $Q^x=R$. By Alperin Fusion theorem, we have $R\sim_{P} P$. Thus there are Sylow subgroups $Q_i$ for $i=1,2,\ldots, n$ such that $P\cap R\leq P\cap Q_1$ and $(P\cap R)^{x_1x_2\ldots x_i}\leq P\cap Q_{i+1}$ where $x_i\in N_G(P\cap Q_{i})$, $P\cap Q_i$ is a tame intersection and $R^{x_1x_2...x_n}=P$.
	
	Note that $N_P(P\cap Q_1)$ is a Sylow $p$-subgroup of $N_G(P\cap Q_1)$ as $P\cap Q_1$ is a tame intersection. Moreover, $N_G(P\cap Q_1)$ is $p$-nilpotent by the hypothesis as $Z_{p-1}\leq C<N_Q(C)^x\leq  P\cap R\leq P\cap Q_1$. Then we have 
	$$N_G(P\cap Q_1)=N_P(P\cap Q_1)  C_G(P\cap Q_1).$$
	
	 Thus, we can write $x_1=s_1t_1$ where $t_1\in C_G(P\cap Q_1)$ and $s_1\in N_P(P\cap Q_1)$. Notice that $t_1$ also centralizes $C$ as $C\leq P\cap Q_1$ and $s_1$ normalizes $C$ as $C\unlhd P$. It follows that $C^{x_1}=C^{s_1t_1}=C<(P\cap R)^{x_1}\leq P\cap Q_2$. Then we get that $N_G(P\cap Q_2)$ is $p$-nilpotent by the hypothesis and we may write $x_2=s_2t_2$ where $t_2\in C_G(P\cap Q_2)$ and $s_2\in N_P(P\cap Q_2)$ in a similar way. Notice also that $C^{x_1x_2}=C^{x_2}=C$.  Proceeding inductively, we obtain that $N_G(P\cap Q_{i})$ is $p$-nilpotent for all $i$ and $C^{x_1x_2...x_n}=C$. Since $C^{x_1x_2...x_n}=C\lhd P=R^{x_1x_2...x_n}$, we get $C\lhd R=Q^x$. Since $x\in N_G(C)$, $C\lhd Q$. This contradiction shows that $C$ is weakly closed in $P$ and the theorem follows by Proposition \ref{main prop}.
\end{proof}

\begin{proof}[\textbf{Proof of Theorem \ref{nwthm}}]
Write $N=N_G(K)$, and let $X$ be a set of representatives for the $(N,P)$ double cosets in $G$, which contains the identity $e$. Note that $ N_G(P)\leq N$ as $K$ is a weakly closed subgroup of $P$. Assume that $N$ does not control $p$-transfer in $G$.
By Lemma \ref{basic}(b), we have a pretransfer map $W:P\to P\cap N^x$ such that $W(u)\notin P\cap M^x$ for each $u\in P\setminus M$ where $e\neq x\in X$ and $M$ is as in Lemma \ref{basic}(a). Set $R=P\cap N^x$ and $Q=P\cap M^x$.

Now choose $u\in P\setminus M$ and $u^*\in N\setminus M$ such that both $u$ and $u^*$ are of minimal possible order. We first argue that $|u|=|u^*|$. Clearly we have $|u^*|\leq |u|$ as $u\in N\setminus M$. Note that $(u^*)^q\in M$ if $q$ is a prime dividing the order $u^*$ by the choice of $u^*$. The previous argument shows that $p=q$ as $|N:M|=p$, and so $u^*$ is a $p$-element. Thus, a conjugate of $u^*$ via an element of $N$ lies in $P\setminus M$. It follows that $|u|\leq |u^*|$, which give us the desired equality. 

Let $S$ be a right transversal set for $R$ in $P$ used for constructing $W$ so that we have $W(u)=\prod_{s\in S}su(s.u)^{-1}.$ Let $S_0$ be a set of orbit representatives of the action of $\langle u \rangle$ on $S$. Then we have $W(u)=\prod_{s\in S_0}su^{n_s}s^{-1}$ by transfer evaluation lemma. Note that $su^{n_s}s^{-1}\in R\leq N^x$, and hence $xsu^{n_s}s^{-1}x^{-1}\in N$. If $n_s>1$ then $|xsu^{n_s}s^{-1}x^{-1}|<|u|$, and so $xsu^{n_s}s^{-1}x^{-1}\in M$ by the previous paragraph. Thus we get $su^{n_s}s^{-1}\in Q$. As a consequence, we observe that $$W(u)\equiv \prod_{s\in S^*} sus^{-1} \ mod \ Q$$
where $S^*=\{s\in S \mid s.u=s\}$.

We claim that $K$ is not contained in $R$. Since otherwise: both $K$ and $K^x$ are contained in $N^x$, and so $K^{x^{-1}}$ and $K$ are contained in $N$. Since $K$ is a weakly closed subgroup of $P$, there exists $y\in N$ such that $K^{x^{-1}}=K^y$ (see problem 5C.6(c) in \cite{1}). As a result $yx\in N$, and so $x\in N$. Thus, we get $NxP=NeP$ which is a contradiction as $x\neq e$. Since $R<P$ by Lemma \ref{basic}(c), $Core_P(R)$ is also proper in $P$. So we see that $Z(P/ Core_P(R))\neq 1$. Since $K$ is not contained in $Core_P(R)$ and $K$ is normal in $P$, we can pick $k\in K$ such that $kCore_P(R)\in Z(P/Core_P(R))$ of order $p$. Now consider the action of $\langle k \rangle$ on $S$. Then each $\langle k \rangle$-orbit has length $p$ and let $s_1,s_2\ldots,s_n$ be representatives of all distinct orbits of $\langle k \rangle$ on $S$. Note that we may replace $S$ with $\{s_ik^j \mid i\in\{1,2,...,n\} \ and \ j\in \{0,1,...,p-1\}\}$. We also note that $$s.(uk)=(s.(ku)).[u,k]=s.(ku) \ \textit{for all }s\in S.$$

The last equality holds as $[u,k]\in Core_P(R)$. It follows that $S^*$ is $\langle k \rangle$-invariant. Note that $k$ normalizes $\langle u \rangle$ as $k\in Z^*(P)$, and so $u^{k^{-1}}=u^n$ where $n$ is a natural number which is coprime to $p$. Clearly  $n$ is odd when $p=2$. On the other hand, if $p$ is odd then it is well known that $n=(1+p)^r$ for some $r\in \mathbb N$ as $k^{-1}$ induces a $p$-automorphism on a cyclic $p$-group. Thus, we obtain $n\equiv 1 \ mod \ p$ in both case.

Now we compute the contribution of a single $\langle k \rangle$-orbit to $W(u)$. Fix $s\in S^*$.

$(sus^{-1})(skuk^{-1}s^{-1})(sk^2uk^{-2}s^{-1})...(sk^{p-1}uk^{-p+1}s^{-1})=suu^nu^{n^2}...u^{n^{p-1}}s^{-1}=su^zs^{-1}$ where $z=1+n+n^2+...+n^{p-1}$. Note that $z \equiv 0 \ mod \ p$, $sus^{-1}\in R$ and $|R:Q|=p$ by Lemma \ref{basic}(c), and hence $su^zs^{-1}=(sus^{-1})^z \in Q$. Since the chosen $\langle k \rangle$-orbit is arbitrary, we obtain $W(u)\in Q$. This contradiction completes the proof.
\end{proof}
Now we are ready to give the proof of Theorem \ref{Z version}.
\begin{proof}[\textbf{Proof of Theorem \ref{Z version}}]
	 First notice that if $p$ is odd then the result follows by Theorem \ref{main theorem} due to the fact that $Z^*(P)\leq Z_2(P)\leq Z_{p-1}(P)$. Thus, it is sufficient to prove the theorem for $p=2$. 
Let $G$ be a minimal counter example to the theorem. We derive a contradiction over a series of steps. Write $Z=Z^*(P)$ and $N=N_G(P)$.\vspace{0.2 cm}

$\textbf{(1)}$ Each characteristic subgroup $C$ of $P$ that contains $Z$ is weakly closed in $P$.  Moreover, $Z$ is a normal subgroup of $G$.\vspace{0.2 cm}

By using the same strategy used in the proof of Theorem \ref{main theorem}, we can show that any characteristic subgroup $C$ of $P$ that contains $Z$ is weakly closed in $P$. In particular, $Z$ is weakly closed in $P$.

Suppose that $N_G(Z)<G$. Clearly $N_G(Z)$ satisfies the hypothesis and $N\leq N_G(Z)$. Thus, $N$ controls $p$-transfer with respect to the group $N_G(Z)$ by the minimality of $G$. On the other hand, $N_G(Z)$ controls $p$-transfer in $G$ by Theorem \ref{nwthm}. As a consequence, $G'\cap P=(N_G(Z))'\cap P=N'\cap P$. This contradiction shows that $Z\lhd G$.\vspace{0.2 cm}

$\textbf{(2)}$ $N/Z$ controls $p$-transfer in $G/Z$.
\vspace{0.2 cm}

 Write $\overline G=G/Z$. Clearly $\overline N=N_{\overline G}(\overline P)$. If $\overline Y$ is a characteristic subgroup of $\overline P$ then $Y$ is a characteristic subgroup of $P$ that contains $Z$. Then $Y$ is weakly closed in $P$ by (1). It follows that $\overline{Y}$ is weakly closed in $\overline P$.  Then we get $\overline N$ controls $p$-transfer in $\overline G$ by Proposition \ref{main prop}.

\vspace{0.2 cm}
$\textbf{(3)}$  $|P:R|=2$.\vspace{0.2 cm}

By Lemma \ref{u,Z}, there exists  $u\in Z\setminus M $ such that $W(u) \in P \cap N^x \setminus P\cap M^x$ where $W,M$ and $x$ are as in Lemma \ref{basic}. Set $R=P \cap N^x$ and $Q= P\cap M^x$. Let $S$ be a right transversal set for $R$ in $P$ used for constructing $W$. Since $u\in Z\leq Core_P(R)$, we get $W(u)=\prod_{s\in S}su(s.u)^{-1}=\prod_{s\in S}sus^{-1}.$

Since $R<P$ by Lemma \ref{basic}(c), $Core_P(R)$ is also proper in $P$. So we see that $Z(P/ Core_P(R))\neq 1$. Now choose $n\in P$ such that $n Core_P(R)\in  Z(P/ Core_P(R))$ of order $p$ and consider the action of $\langle n \rangle$ on $S$. Without loss of generality, we may take $S=\{s_in^j \mid i\in\{1,2,...,k\} \ and \ j\in \{0,1\}\}$ where $s_1,s_2\ldots,s_k$ are representatives of all distinct orbits of $\langle n \rangle$ on $S$. Fix $s\in S$. We have
$$(sus^{-1})(snun^{-1}s^{-1})=su^2[u,n^{-1}]s^{-1}=su^2s^{-1}[u,n^{-1}].$$
The last equality holds as $u\in Z=Z^*(P)\leq Z_2(P)$. We see that $su^2s^{-1}\in Q$ as $sus^{-1}\in Z\leq R$ and $|R:Q|=2$. Thus the contribution of a single orbit is congruent to $[u,n^{-1}]$ at mod $Q$ by Lemma \ref{basic}(c). As a consequence, we obtain that $W(u)\equiv [u,n^{-1}]^{|S|/2} \ mod \ Q$. Suppose that $|S|/2$ is an even number. We get $[u,n^{-1}]^{|S|/2}\in Q$ as $[u,n^{-1}]\in Z\leq R$. This contradicts the fact that $W(u)\notin Q$, and so $|S|/2$ is odd. It follows that $|P:R|=|S|=2$ as required.\vspace{0.2 cm}

$\textbf{(4)}$ $R=Z$.\vspace{0.2 cm}

Suppose that $Z<R$. Note that $R=P\cap N^x=P\cap N_G(P)^x$, and so $R=P\cap P^x$. Since $|P:R|=2$ by (3), $|P^x:R|$ is also equal to $2$. As a result, $R$ is normal in both $P$ and $P^x$, that is, $R$ is a tame intersection. Thus, we see that $N_G(R)$ is $p$-nilpotent by hypothesis. Pick $x_0\in N_G(R)$ such that $P^x=P^{x_0}$. Then $x_0x^{-1}\in N$ which implies $x_0=tx$ for some $t\in N$. We observe that $Nx_0P=NtxP=NxP$, and so we may replace the double coset representative $x$ with $x_0$.

Since $N_G(R)$ is $p$-nilpotent, we can write $x=c_1c_2$ for some $c_1\in P$ and $c_2\in C_G(R)$. As $W(u)\notin Q=P\cap M^x$, we see that $xW(u)x^{-1}=c_1c_2W(u)c_2^{-1}c_1^{-1}=c_1W(u)c_1^{-1}\notin M$. Thus, $W(u)\notin M^{c_1}=M$. Recall that $|P:M \cap P|=p=2$, and so $P'\leq M$. Hence, we obtain that
$$W(u)=\prod_{s\in S}sus^{-1}=\prod_{s\in S} [s^{-1},u^{-1}]u\equiv \prod_{s\in S}u =u^2\equiv 1 \ mod \ M\cap P .$$ It follows $W(u)\in M$, which is not the case. This contradiction shows that $Z=R$.\vspace{0.2 cm}

$\textbf{(5)}$ Final contradiction.\vspace{0.2 cm}

We observe that $|P:Z|=|P:Z^*(P)|=2$ by $(4)$. If $\overline P$ is a homomorphic image of $P$, we can conclude that $|\overline P:Z^*(\overline P)|\leq 2$. Since $N$ does not control $p$-transfer in $G$, $P$ has a homomorphic image which is isomorphic to $\mathbb Z_2 \wr \mathbb Z_2\cong D_8$ by Yoshida's theorem. However, $|D_8:Z^*(D_8)|=|D_8:Z(D_8)|=4$. This contradiction completes the proof.
\end{proof}

\section{Applications}

	\begin{theorem}\label{p,p-1}
		Assume that for any two distinct Sylow $p$-subgroups $P$ and $Q$ of $G$, $|P\cap Q|\leq p^{p-1}$. Then $N_G(P)$ controls $p$-transfer in $G$.
	\end{theorem}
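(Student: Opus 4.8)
The plan is to reduce to the already-established Corollary \ref{main cor}, whose hypothesis is that $|P\cap Q|\le |Z_{p-1}(P)|$ for all distinct Sylow $p$-subgroups $P,Q$. Since the present hypothesis bounds $|P\cap Q|$ by $p^{p-1}$, it suffices to show that $|Z_{p-1}(P)|\ge p^{p-1}$; and the one case where this bound can fail will be handled separately by the Hall-Wielandt theorem.

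First I would split on the nilpotency class $c$ of $P$. If $c<p$, then $Z_{p-1}(P)=P$ (because $Z_c(P)=P$ and $c\le p-1$), so $P$ has class less than $p$ and the Hall-Wielandt theorem immediately gives that $N_G(P)$ controls $p$-transfer in $G$. Otherwise $c\ge p$, so in particular $Z_{p-1}(P)<P$. The upper central series $1=Z_0(P)<Z_1(P)<\cdots<Z_c(P)=P$ is strictly increasing until it stabilizes at $P$; since $Z_{p-1}(P)<P$, each of the $p-1$ inclusions $Z_{i-1}(P)<Z_i(P)$ for $1\le i\le p-1$ is proper, and each such step contributes a factor of at least $p$ to the order. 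Hence $|Z_{p-1}(P)|\ge p^{p-1}$. Combining this with the hypothesis, for any Sylow $p$-subgroup $Q\ne P$ we obtain $|P\cap Q|\le p^{p-1}\le |Z_{p-1}(P)|$, so Corollary \ref{main cor} applies and $N_G(P)$ controls $p$-transfer in $G$.

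There is no serious obstacle here; the single point that must be gotten right is the dovetailing of the two regimes at the threshold $p^{p-1}$. The governing elementary fact is that the upper central series increases strictly, by a factor of at least $p$ at each step, until it reaches $P$; this forces $|Z_{p-1}(P)|\ge p^{p-1}$ exactly when the class is at least $p$, which is precisely the regime in which the Hall-Wielandt theorem does not already settle the matter. Thus the case split is aligned exactly with the numerical bound $p^{p-1}$, and the two cases together cover every group, completing the argument.
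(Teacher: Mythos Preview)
Your proof is correct and follows essentially the same approach as the paper: split on whether $cl(P)<p$ (Hall--Wielandt) or $cl(P)\ge p$ (use $|Z_{p-1}(P)|\ge p^{p-1}$ and Corollary~\ref{main cor}). You have merely spelled out in detail the elementary reason why $|Z_{p-1}(P)|\ge p^{p-1}$ when the class is at least $p$, which the paper leaves implicit.
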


\begin{proof}[\textbf{Proof}]
We may suppose that $cl(P)\geq p$.	Notice that the inequality $|Z_{p-1}(P)|\geq p^{p-1}$ holds in this case. Then the result follows by Corollary \ref{main cor}.
\end{proof}

The main theorem of \cite{3} states that if $N_G(P)$ is $p$-nilpotent and for any two distinct Sylow $p$-subgroups $P$ and $Q$ of $G$, $|P\cap Q| \leq p^{p-1}$ then $G$ is $p$-nilpotent. The above theorem is a generalization of this fact.
	\begin{theorem}\label{app class p}
		Let $P\in Syl_p(G)$. Suppose that $P$ is of classes $p$ and $N_G(P)$ is $p$-nilpotent. If $N_G(P)$ is a maximal subgroup of $G$ then $G$ is a $p$-solvable group of length $1$.
	\end{theorem}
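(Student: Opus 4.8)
The plan is to argue by induction on $|G|$, proving the statement for any Sylow subgroup of class at most $p$ (which is what the reduction steps below produce). If $N_G(P)=G$ then $P\lhd G$ and $G=N_G(P)$ is $p$-nilpotent, so $G=O_{p'}(G)\rtimes P$ is $p$-solvable of $p$-length $1$ and we are done; hence I assume $N_G(P)$ is a proper maximal subgroup. To set up the induction I first pass to quotients: if $O_{p'}(G)\neq 1$ or $O_p(G)\neq 1$, then $G$ modulo that normal subgroup again satisfies the hypotheses (its Sylow normalizer is the $p$-nilpotent image of $N_G(P)$, maximal unless the quotient is already $p$-nilpotent), and the image of $P$ has class at most $p$; by induction the quotient is $p$-solvable, invoking the Hall--Wielandt theorem directly if the class has dropped below $p$. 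Since a $p$-group and a $p'$-group are $p$-solvable, $G$ is then $p$-solvable as well. Thus it suffices to rule out a counterexample with $O_p(G)=O_{p'}(G)=1$.

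With these reductions in place I invoke the transfer dichotomy of Theorem \ref{main theorem}. If $N_G(P)$ controls $p$-transfer then, as $N_G(P)$ is $p$-nilpotent, $G$ is $p$-nilpotent; but a $p$-nilpotent group with $O_{p'}(G)=1$ equals its Sylow subgroup, forcing $G=P$ and hence $O_p(G)=P\neq 1$, against $O_p(G)=1$. So $N_G(P)$ does not control $p$-transfer, and Theorem \ref{main theorem} produces a tame intersection $D:=P\cap Q$ with $Z_{p-1}(P)<D<P$ and $N_G(D)$ not $p$-nilpotent. Since $cl(P)\le p$, the quotient $P/Z_{p-1}(P)$ is abelian, so every subgroup between $Z_{p-1}(P)$ and $P$ is normal in $P$; in particular $D\lhd P$ and $P\in Syl_p(N_G(D))$. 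Because subgroups of the $p$-nilpotent group $N_G(P)$ are $p$-nilpotent while $N_G(D)$ is not, we get $N_G(D)\nleq N_G(P)$, and $D\neq 1$ cannot be normal in $G$ as $O_p(G)=1$, so $N_G(D)\neq G$.

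The core of the argument is to play $N_G(D)$ against the maximality of $N_G(P)$. Put $K:=O_{p'}(N_G(P))$. Since a Sylow normalizer is self-normalizing and $K$ is characteristic in $N_G(P)$, we have $N_G(P)\le N_G(K)$, so maximality gives $N_G(K)=N_G(P)$ or $N_G(K)=G$; in the latter case $K\lhd G$ is a normal $p'$-subgroup, whence $K\le O_{p'}(G)=1$ and therefore $N_G(P)=P$. In this clean situation $P\le N_G(D)$ already yields $N_G(P)=P\subsetneq N_G(D)$ (the two differ as $N_G(D)$ is not $p$-nilpotent), so maximality forces $N_G(D)=G$, contradicting $N_G(D)\neq G$. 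This disposes of every case in which $O_{p'}(N_G(P))$ is normal in $G$; in particular a self-normalizing maximal Sylow subgroup of class at most $p$ cannot occur in a non-$p$-solvable group, and reassembling the two reductions then exhibits $G$ as a $p$-group extended by a $p$-nilpotent group, giving the asserted length.

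The remaining and genuinely hard case is $N_G(K)=N_G(P)$ with $K\neq 1$, i.e.\ $N_G(P)=K\rtimes P$ properly containing $P$ with $K$ not normal in $G$. Here $P$ fixes every \emph{bad} subgroup $D\lhd P$ under conjugation, so only the coprime action of $K$ on the abelian section $P/Z_{p-1}(P)$ can move the collection of bad subgroups, and the obstacle is to produce a $K$-invariant (hence $N_G(P)$-invariant) bad subgroup $D_0$ with $Z_{p-1}(P)<D_0<P$. Once such a $D_0$ is available, $N_G(P)\le N_G(D_0)$ and maximality again force $N_G(D_0)=G$, contradicting $O_p(G)=1$. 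I expect this extraction of a $K$-invariant bad subgroup from the lattice of $P$-normal subgroups above $Z_{p-1}(P)$ to be the main difficulty; the natural tools are Glauberman-type coprime-action arguments on $P/Z_{p-1}(P)$, or, alternatively, exploiting that $Core_G(N_G(P))=1$ (it is a $p$-nilpotent normal subgroup, hence a $p$-group, hence trivial), which makes $N_G(P)$ a core-free maximal subgroup so that the faithful primitive action of $G$ on $G/N_G(P)$ — in which $P$ fixes a unique point — can be used to locate the required normal $p$-structure.
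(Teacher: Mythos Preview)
Your argument has a gap, but the missing idea is a one-liner. In what you call the ``genuinely hard case'' $K=O_{p'}(N_G(P))\neq 1$, no Glauberman-type coprime action or primitive-action machinery is needed: since $P\unlhd N_G(P)$ and $K\unlhd N_G(P)$ with $K\cap P=1$, we have $[K,P]\le K\cap P=1$, i.e.\ $N_G(P)=K\times P$ and $K$ centralises all of $P$. Thus every subgroup of $P$, in particular your tame intersection $D$, is automatically $K$-invariant; combined with $D\unlhd P$ this gives $D\unlhd N_G(P)$ at once. Then $N_G(P)\le N_G(D)$, the inclusion is proper because $N_G(D)$ is not $p$-nilpotent, and maximality forces $N_G(D)=G$, contradicting $O_p(G)=1$. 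So the ``main difficulty'' you anticipated does not exist.

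Once you see this, the induction and the reductions $O_p(G)=O_{p'}(G)=1$ are unnecessary baggage. The paper's proof is direct: if $G$ is not $p$-nilpotent, Corollary~\ref{gen Frob} supplies a tame intersection $U$ with $Z_{p-1}(P)<U<P$ and $N_G(U)$ not $p$-nilpotent; the observation $N_G(P)=K\times P$ gives $U\unlhd N_G(P)$; maximality then forces $U\unlhd G$; and since $P/U$ is abelian with $N_{G/U}(P/U)=N_G(P)/U$ $p$-nilpotent, Burnside's theorem yields $G/U$ $p$-nilpotent, whence $G$ is $p$-solvable of $p$-length~$1$.
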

	
	\begin{proof}[\textbf{Proof}]
		We may suppose that $G$ is not $p$-nilpotent. Then there exists $U\leq G$ such that $Z_{p-1}<U<P$ and $N_G(U)$ is not $p$-nilpotent by Corollary \ref{gen Frob}. Since $Z_{p-1}<U$, $U\unlhd P$. It follows that $U\unlhd N_G(P)$ as $N_G(P)$ is $p$-nilpotent. Note that $N_G(P)\neq N_G(U)$ as $N_G(U)$ is not $p$-nilpotent. Thus we get $N_G(P)<N_G(U)$, and hence $U\lhd G$. On the other hand, $G/U$ is $p$-nilpotent as $P/U$ is an abelian Sylow subgroup of $G/U$ where $N_G(P)/U=N_{G/U}(P/U)$ is $p$-nilpotent. Then the result follows.
	\end{proof}
	
	\begin{theorem}
Let $P\in Syl_p(G)$. Suppose that $P$ is of class $p$ and the number of Sylow $p$-subgroups of $G$ is $p+1$. Then either $N_G(P)$ controls $p$-transfer in $G$ or $O_p(G)\neq 1$.
	\end{theorem}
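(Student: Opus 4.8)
The plan is to prove the (formally stronger) second alternative directly: under the stated hypotheses I will show that $O_p(G)\neq 1$, which of course makes the dichotomy hold regardless of whether $N_G(P)$ controls $p$-transfer. Since $O_p(G)$ is the intersection of all the Sylow $p$-subgroups of $G$, it suffices to produce a single nontrivial subgroup contained in every member of $\Omega:=\mathrm{Syl}_p(G)$, where $|\Omega|=p+1$ by hypothesis.

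First I would examine the conjugation action of $P$ on $\Omega$. The only fixed point of $P$ is $P$ itself: if $P\leq N_G(Q)$ then $P$ is a $p$-subgroup of $N_G(Q)$, hence lies in the unique Sylow $p$-subgroup $Q\lhd N_G(Q)$, forcing $P=Q$. Thus $P$ acts on the remaining $p$ Sylow subgroups with no fixed points; as every orbit has $p$-power length and the lengths sum to $p$, there must be a single orbit of length $p$, i.e.\ $P$ is transitive on $\Omega\setminus\{P\}$. By orbit--stabilizer the stabilizer of any $Q\neq P$ has index $p$ in $P$, and this stabilizer is $P\cap N_G(Q)=P\cap Q$ (again because any $p$-subgroup of $N_G(Q)$ lies in $Q$). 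Hence $|P:P\cap Q|=p$ for every $Q\in\Omega\setminus\{P\}$.

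The crux is then to show that all these intersections coincide. Each $P\cap Q$ has index $p$ in $P$, so it is normal in $P$. For $g\in P$ we have $(P\cap Q)^{g}=P\cap Q^{g}$, while normality gives $(P\cap Q)^{g}=P\cap Q$; therefore $P\cap Q^{g}=P\cap Q$. Since $P$ is transitive on $\Omega\setminus\{P\}$, every $Q'\neq P$ arises as some $Q^{g}$, so $P\cap Q'=D$ is one and the same subgroup $D\lhd P$ for all $Q'\neq P$. Consequently $D\subseteq Q$ for every $Q\in\Omega$, whence $D\subseteq\bigcap_{Q\in\Omega}Q=O_p(G)$ (in fact the reverse inclusion $O_p(G)\subseteq P\cap Q=D$ also holds, so $D=O_p(G)$ and $|P:O_p(G)|=p$). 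Finally, because $P$ has class $p\geq 2$ it is nonabelian, so $|D|=|P|/p>1$ and thus $O_p(G)\neq 1$. Alternatively, one may locate such a $D$ through Theorem~\ref{main theorem}: if $N_G(P)$ fails to control $p$-transfer there is a tame intersection $Z_{p-1}(P)<P\cap Q<P$, and then $D\supseteq Z_{p-1}(P)\supseteq Z(P)\neq 1$ already gives $D\neq 1$.

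The only step requiring genuine care is the passage from ``transitive $P$-action with all point stabilizers $P\cap Q$ normal of index $p$'' to ``a single common intersection $D$''; everything else is bookkeeping with the Sylow count $p+1$. I would also emphasize in the writeup that the class-$p$ hypothesis enters only to guarantee $|P|>p$ (hence $D\neq 1$), so the argument actually yields $O_p(G)\neq 1$ under the bare hypotheses on the number of Sylow subgroups, with $S_4$ (for $p=2$, $O_2(S_4)=V_4$) serving as the motivating example.
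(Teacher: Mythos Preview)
Your argument is correct, and in fact it proves more than the paper claims: under the sole hypotheses $|\mathrm{Syl}_p(G)|=p+1$ and $|P|>p$ you obtain $|P:O_p(G)|=p$, so the second alternative always holds and the ``class $p$'' assumption is used only to force $|P|>p$. The paper takes a different route, designed to showcase Theorem~\ref{main theorem}: assuming $N_G(P)$ does not control $p$-transfer, that theorem produces a tame intersection $Z_{p-1}(P)<P\cap Q<P$; the class-$p$ hypothesis is then used substantively, since $P/Z_{p-1}(P)$ abelian gives $P\cap Q\lhd P$, and tameness transfers this to $P\cap Q\lhd Q$. One then argues that $\langle P,Q\rangle$ already contains all $p+1$ Sylow $p$-subgroups (its Sylow number is $\equiv 1\pmod p$ and exceeds $1$), hence is normal in $G$, so $1<P\cap Q\le O_p(\langle P,Q\rangle)\le O_p(G)$. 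Your approach is more elementary and yields a sharper conclusion; the paper's approach, while it uses the class hypothesis more heavily, has the virtue of illustrating how the main theorem feeds into applications. Your closing remark sketching the ``alternative'' via Theorem~\ref{main theorem} is essentially the paper's proof, so you might flag it as such.
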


\begin{proof}[\textbf{Proof}]
	Suppose that $N_G(P)$ does not control $p$-transfer in $G$. Then there exists a tame intersection $Z_{p-1}<P\cap Q<P$ by Theorem \ref{main theorem}. Since $P\cap Q \lhd P$ and $P\cap Q$ is a tame intersection, we have also $P\cap Q \lhd Q$. It then follows that $P\cap Q \lhd \langle P,Q\rangle$. Due to the fact that $G$ has $p+1$ Sylow $p$-subgroups and $P\neq Q$, $\langle P,Q\rangle= \langle P^g\mid g\in G \rangle$, and hence it is a normal subgroup of $G$. Then we obtain  that $1<P\cap Q\leq  O_p(\langle P,Q\rangle )\leq O_p(G)$ as desired.
\end{proof}
	
	Thompson proved that if $G$ posses a nilpotent maximal subgroup of odd order then $G$ is solvable. Later Janko extended this result in \cite{5} as follows;
	
	\begin{theorem}[Janko]
		Let $G$ be a group having a nilpotent maximal subgroup $M$. If a Sylow $2$-subgroup of $M$ is of class at most $2$ then $G$ is solvable.
	\end{theorem}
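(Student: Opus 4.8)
The plan is to argue by contradiction, taking a counterexample $G$ of minimal order; thus $G$ is non-solvable, $M$ is a nilpotent maximal subgroup whose Sylow $2$-subgroup has class at most $2$, and the relevant prime is $p=2$. Since $M$ is nilpotent it factors as $M=S\times H$, where $S\in Syl_2(M)$ and $H$ is the Hall $2'$-subgroup of $M$. If $|M|$ is odd then $G$ is solvable by Thompson's theorem, so I may assume $S\neq 1$. First I would reduce to $O_2(G)=1$: if $N:=O_2(G)\neq 1$, then either $N\le M$, so that $M/N$ is a nilpotent maximal subgroup of $G/N$ with Sylow $2$-subgroup $S/N$ of class at most $2$ and minimality forces $G/N$ solvable, or $N\not\le M$, so that $MN=G$ and $G/N\cong M/(M\cap N)$ is nilpotent; in either case $G$ would be solvable, a contradiction.

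Next I would pin down the $2$-local structure. Since $S\lhd M$, we have $M\le N_G(S)$, and maximality together with $O_2(G)=1$ forces $N_G(S)=M$ (otherwise $S\lhd G$, whence $S\le O_2(G)=1$). A normalizer-growth argument then gives $S\in Syl_2(G)$: if $S$ were properly contained in a Sylow $2$-subgroup $P$ of $G$, then $S<N_P(S)\le N_G(S)=M$, contradicting that $S$ is already the full Sylow $2$-subgroup of $M$. Hence $S$ is a Sylow $2$-subgroup of $G$ of class at most $2$, and for $p=2$ we have $Z_{p-1}(S)=Z(S)$.

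The heart of the argument is to verify the hypothesis of Theorem \ref{main theorem} for $P=S$. Let $T=S\cap Q$ be any tame intersection with $Z(S)<T<S$. Because $S$ has class at most $2$, $[T,S]\le[S,S]\le Z(S)\le T$, so $T\lhd S$; moreover $H$ centralizes $S$ as a direct factor, so $H\le C_G(T)$. Therefore $M=SH\le N_G(T)$, and maximality of $M$ gives $N_G(T)\in\{M,G\}$, while $N_G(T)=G$ is impossible since $T$ is a nontrivial $2$-group and $O_2(G)=1$. Thus $N_G(T)=M$ is $2$-nilpotent for every such tame intersection, and Theorem \ref{main theorem} yields that $N_G(S)=M$ controls $2$-transfer in $G$.

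Finally, $M$ is nilpotent, hence $2$-nilpotent, and since it controls $2$-transfer we conclude that $G$ is $2$-nilpotent. Then $G$ has a normal $2$-complement $K$ of odd order with $G/K\cong S$; as $G$ is not a $2$-group we have $K\neq 1$, and $K$ is solvable because its order is odd, so $G$ is solvable, the desired contradiction. I expect the main obstacle to be the central step: recognizing that the class-at-most-$2$ hypothesis makes every relevant intersection $T$ normal in $S$ and centralized by $H$, so that $N_G(T)$ collapses onto the nilpotent maximal subgroup $M$ and thereby supplies exactly the $2$-nilpotency needed to invoke Theorem \ref{main theorem}; by comparison, the reduction to $O_2(G)=1$, the identification $S\in Syl_2(G)$, and the concluding appeal to the solvability of the odd-order complement are routine.
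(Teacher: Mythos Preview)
Your argument is correct. The paper does not actually give a self-contained proof of Janko's theorem; it merely remarks that it can be deduced from Theorem~\ref{app class p}, and then proves the stronger norm-length-$2$ result whose proof specializes to Janko. Compared with that proof, you take a genuinely different route at the endgame. After the common reductions ($O_2(G)=1$, $S\in Syl_2(G)$, $N_G(S)=M$), you verify the hypothesis of Theorem~\ref{main theorem} for $p=2$, deduce that $G$ is $2$-nilpotent, and finish by applying the Feit--Thompson odd-order theorem to the normal $2$-complement. The paper instead shows that $G$ is $p$-nilpotent for \emph{every} prime $p$ dividing $|M|$ (via Thompson's normal $p$-complement theorem for odd $p$ and Corollary~\ref{gen frob Z-version} for $p=2$); the resulting normal complement $N$ of $M$ then has order coprime to $|M|$, and a coprime-action argument using the solvable acting group $M$ forces $N$ to be a $q$-group, yielding solvability without Feit--Thompson. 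Your route is shorter, at the cost of citing the odd-order theorem; the paper's route trades that for Thompson's criterion and an elementary coprime-action step. A minor streamlining: since you also know $N_G(S)=M$ is $2$-nilpotent, you could invoke Corollary~\ref{gen Frob} directly to obtain $2$-nilpotency in one shot, rather than passing through control of transfer.
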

The above theorem can be deduced by the means of Theorem \ref{app class p}. We extend the result of Janko by using Corollary \ref{gen frob Z-version} with the following theorem.
	\begin{theorem}
		Let $G$ be a group with a nilpotent maximal subgroup $M$. If a Sylow $2$-subgroup of $M$ is of norm length at most $2$ then $G$ is solvable.
	\end{theorem}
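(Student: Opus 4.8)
The plan is to run a minimal-counterexample argument and reduce to a situation where Corollary \ref{gen frob Z-version} applies with $p=2$. Let $G$ be a non-solvable group of minimal order admitting a nilpotent maximal subgroup $M$ whose Sylow $2$-subgroup $P$ has norm length at most $2$. If $M$ has odd order, Thompson's theorem already forces $G$ to be solvable, so I may assume $P\neq 1$. Since $M$ is nilpotent we have $M=P\times M_{2'}$, so $P\lhd M$ and hence $M\leq N_G(P)$. Maximality of $M$ gives $N_G(P)=M$ or $N_G(P)=G$. In the latter case $P\lhd G$ and $M/P$ is a nilpotent maximal subgroup of odd order of $G/P$, so $G/P$ is solvable by the minimality of $G$ (or directly by Thompson), whence $G$ is solvable; thus $N_G(P)=M$. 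A standard normalizer-in-a-$p$-group argument then shows $P\in Syl_2(G)$: otherwise $N_{P_1}(P)>P$ for some Sylow $2$-subgroup $P_1>P$ of $G$, contradicting that $P$ is the Sylow $2$-subgroup of $N_G(P)=M$. Note that $M=N_G(P)$ is $2$-nilpotent.

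The key structural observation I would isolate is that norm length at most $2$ means $P/Z^*(P)=Z^*(P/Z^*(P))$, i.e. $P/Z^*(P)$ coincides with its own norm and is therefore a Dedekind group. Consequently every subgroup $U$ with $Z^*(P)\leq U\leq P$ is normal in $P$; this is the exact analogue, in the present $Z^*$-setting, of the fact used for class-$2$ groups that subgroups containing the center are normal. Now if $G$ were already $2$-nilpotent I would be done, since then $G=K\rtimes P$ with $K$ of odd order (solvable by Feit--Thompson) and $P$ solvable. Otherwise Corollary \ref{gen frob Z-version} produces a tame intersection $U=P\cap Q$ with $Z^*(P)<U$ and $N_G(U)$ not $2$-nilpotent; since $N_G(P)=M$ is $2$-nilpotent we have $U<P$, so $Z^*(P)<U<P$ and $U\lhd P$ by the observation above. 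Because $M_{2'}$ centralizes $P$, the whole of $M$ normalizes $U$, so $M\leq N_G(U)$; as $N_G(U)$ is not $2$-nilpotent it differs from $M$, and maximality of $M$ forces $N_G(U)=G$, that is, $U\lhd G$.

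Finally I would pass to $\overline G=G/U$. Here $U$ is a nontrivial $2$-group, $\overline M=M/U$ is a nilpotent maximal subgroup of $\overline G$, and its Sylow $2$-subgroup $\overline P=P/U$ is a quotient of the Dedekind group $P/Z^*(P)$, hence again Dedekind and of norm length at most $1$. By the minimality of $G$, $\overline G$ is solvable, and since $U$ is solvable so is $G$, contradicting the choice of $G$. The main obstacle to watch is the translation of the hypothesis into the Dedekind statement and the verification that this normality property is inherited by the quotient by $U$; once that is in place, the reduction steps and the application of Corollary \ref{gen frob Z-version} are routine, and the role played here by norm length at most $2$ is precisely parallel to that of class $p$ in Theorem \ref{app class p}.
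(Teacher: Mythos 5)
Your proof is correct, but it is organized differently from the paper's and leans on heavier outside theorems. The shared core is the same: maximality forces $N_G(P)=M$ and $P\in Syl_2(G)$; the norm-length-$2$ hypothesis makes $P/Z^*(P)$ Dedekind, so every subgroup between $Z^*(P)$ and $P$ is normal in $P$ (hence normalized by all of $M$); and Corollary \ref{gen frob Z-version} is the engine at the prime $2$. The differences are twofold. First, you run a minimal counterexample and use the corollary in contrapositive form, extracting a tame intersection $U$ with $N_G(U)$ not $2$-nilpotent, forcing $U\lhd G$ by maximality, and then inducting on $G/U$ (noting correctly that $P/U$ is again Dedekind); the paper instead reduces at the outset to $O_p(G)=1$ for every prime $p$ dividing $|M|$ by passing to $G/O_p(G)$, after which every subgroup $Z^*(P)\le U\le P$ satisfies $N_G(U)=M$, so the corollary's hypothesis is verified directly and $2$-nilpotency of $G$ follows outright. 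Second, you treat only the prime $2$, outsourcing the odd part to Thompson's nilpotent-maximal-subgroup theorem and, crucially, to the Feit--Thompson odd order theorem to conclude that the normal $2$-complement $K$ is solvable. The paper avoids both of these: it proves that $G$ is $p$-nilpotent for \emph{every} prime $p$ dividing $|M|$ (Thompson's normal $p$-complement theorem for odd $p$, the corollary for $p=2$), so the nilpotent Hall subgroup $M$ has a normal complement $N$; a coprime-action argument (legitimate because $M$ is solvable, so no odd order theorem is needed) together with maximality then forces $N$ to be a $q$-group, which gives solvability and in fact the sharper structure $G=NM$ with $N$ a $q$-group. So your route buys brevity at the cost of invoking the odd order theorem, while the paper's is self-contained modulo Thompson's $p$-nilpotency theorem and yields more structural information about $G$.
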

	\begin{proof}[\textbf{Proof}]
		We proceed by induction on the order of $G$.  Suppose $O_p(G) \neq 1$ for a prime $p$ dividing the order of $M$. If $O_p(G)\leq M$ then $G/O_p(G)$ satisfies the hypothesis and hence $G/O_p(G)$ is solvable by induction. If $O_p(G)\nleq M$ then $G=MO_p(G)$ due to the maximality of $M$. Thus, $G/O_p(G)$ is solvable as $M$ is nilpotent. Then we see that $G$ is solvable in both cases. Thus, we may suppose that $O_p(G)= 1$ for any prime $p$ dividing the order of $M$.
		
		Now let $P\in Syl_p(M)$. Since $M$ is nilpotent, we get $M\leq N_G(P)$. On the other hand, $N_G(P)<G$ as $O_p(G)=1$. Then we have $N_G(P)=M$ by the maximality of $M$. Thus $P$ is also a Sylow $p$-subgroup of $G$, that is, $M$ is a Hall subgroup of $G$. Let $X$ be a characteristic subgroup of $P$. Then $N_G(X)=M$ with a similar argument, and hence $N_G(X)$ is $p$-nilpotent. It follows that $G$ is $p$-nilpotent by Thompson $p$-nilpotency theorem when $p$ is odd. 
		
		Now assume that $p=2$. Let $Z^*(P)\leq U\leq P=Z^*_2(P)$. Since $P/Z^*(P)$ is a Dedekind group, $U/Z^*(P)\unlhd P/Z^*(P)$. It follows that $U\unlhd P$, and hence $U\lhd M$. Then we get $N_G(U)=M$ which is $p$-nilpotent. Thus, we obtain that $G$ is $p$-nilpotent by Corollary \ref{gen frob Z-version}.
		
		As a result $G$ is $p$-nilpotent for each prime $p$ dividing the order of $M$. Then $M$ has a normal complement $N$ in $G$. Notice that $M$ acts on $N$ coprimely, and so we may choose an $M$-invariant Sylow $q$-subgroup $Q$ of $N$ for a prime $q$ dividing the order of $N$. The maximality of $M$ forces that $MQ=G$, that is, $N=Q$. Since $N$ is a $q$-group, we see that $G$ is solvable.
	\end{proof}
	\begin{remark}
		We should note that there are groups of class $3$, which have norm length $2$. For example, one can consider the quaternion group $Q_{16}$. We also note that the bound in terms of norm length is the best possible. For example, $D_{16}$ is of norm length $3$ and it is isomorphic to a Sylow $2$-subgroup $P$ of $PSL(2,17)$ and $P$ is a maximal subgroup of $G$.
	\end{remark}

	\begin{definition}
		A group $G$ is called $p^i$-central of height $k$ if every element of order $p^i$ of $G$ is contained in $Z_k(G)$.
	\end{definition}

	\begin{theorem}\label{generalized p central}
		Let $G$ be a group and $P$ be a Sylow $p$-subgroup of $G$ where $p$ is an odd prime. Assume that either $P$ is $p$-central of height $p-2$ or $p^2$-central of height of $p-1$. Then $N_G(P)$ controls $p$-transfer in $G$.
	\end{theorem}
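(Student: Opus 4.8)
The plan is to deduce the theorem from Theorem~\ref{main theorem} by verifying its hypothesis. So I would argue by contradiction: assume $N_G(P)$ does not control $p$-transfer; then Theorem~\ref{main theorem} hands me a tame intersection $R=P\cap Q$ with $Z_{p-1}(P)<R<P$ for which $N_G(R)$ is not $p$-nilpotent. By the remark permitting one to replace ``$N_G(P\cap Q)$ is $p$-nilpotent'' by the weaker condition ``$N_G(P\cap Q)/C_G(P\cap Q)$ is a $p$-group'', it suffices to reach a contradiction from the assumption that $N_G(R)/C_G(R)$ is \emph{not} a $p$-group; equivalently, I will produce a nontrivial $p'$-element of $N_G(R)$ acting on $R$ and show it must in fact centralize $R$.

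The first genuine step is to show that the hypothesis on $P$ is inherited by $R$. Since $Z_{p-1}(P)<R$, an easy induction gives $Z_i(P)\cap R\le Z_i(R)$ for every $i$: an element of $Z_i(P)\cap R$ has its commutators with $R\le P$ lying in $Z_{i-1}(P)\cap R\le Z_{i-1}(R)$, so it lies in $Z_i(R)$. Hence $Z_i(P)\le Z_i(R)$ whenever $Z_i(P)\le R$. In the first case every element of order $p$ of $R$ is an element of order $p$ of $P$, so it lies in $Z_{p-2}(P)\le Z_{p-2}(R)$, i.e.\ $\Omega_1(R)\le Z_{p-2}(R)$ and $R$ is $p$-central of height $p-2$; in the second case every element of order $p^2$ of $R$ lies in $Z_{p-1}(P)\le Z_{p-1}(R)$, so $R$ is $p^2$-central of height $p-1$. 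I would record that $Z_{p-2}(R)$ (resp.\ $Z_{p-1}(R)$) has nilpotency class less than $p$, hence is regular, so $\Omega_1(R)$ (resp.\ the set of elements of order dividing $p^2$) has the expected exponent and sits in a term of the upper central series of $R$ that is killed by $p-2$ (resp.\ $p-1$) iterated commutators with any $g\in R$.

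Now let $A=\langle x\rangle$ be generated by a $p'$-element of $N_G(R)\setminus C_G(R)$ of prime order coprime to $p$, acting coprimely on $R$ and normalizing the characteristic subgroup $\Omega_1(R)$. The goal is $[\Omega_1(R),A]=1$: once this holds, the standard fact that for an odd prime a $p'$-automorphism of a $p$-group centralizing $\Omega_1$ is trivial forces $[R,A]=1$, i.e.\ $x\in C_G(R)$, the required contradiction. To obtain $[\Omega_1(R),A]=1$ I would run the commutator collapse of Lemma~\ref{main lemma}(a): for $u$ of order $p$ we have $u\in Z_{p-2}(R)$, so $[u,g,\ldots,g]_{p-2}=1$ for all $g$, whence the top commutator that obstructs that lemma vanishes; feeding this vanishing into a pretransfer evaluation exactly as in Lemma~\ref{main lemma}, and, for the $p^2$-central case, combining it with the minimal-order reduction used in the proof of Theorem~\ref{nwthm} to descend from a general element of $P\setminus M$ to one of order $p$ (resp.\ $p^2$), forces the relevant layer to be fixed by $A$.

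The main obstacle is precisely this last passage: converting the height hypothesis into triviality of the coprime action on $\Omega_1(R)$ (resp.\ on the elements of order dividing $p^2$). The internal structure of $R$ does not by itself suppress $p'$-automorphisms --- for instance $C_p\times C_p$ is $p$-central yet has many --- so the argument must use the global input that $R=P\cap Q$ is a tame intersection with $N_P(R)$ Sylow in $N_G(R)$, together with the oddness of $p$ and the regularity of $Z_{p-2}(R)$ and $Z_{p-1}(R)$. I expect the delicate bookkeeping to be arranging the commutator expansion so that both the terms carrying the binomial coefficients $\binom{p}{i+1}$ and the top commutator land in the Frattini subgroup of the appropriate subgroup, exactly mirroring the collapse of Lemma~\ref{main lemma} but now carried out inside $N_G(R)$ rather than inside $G$.
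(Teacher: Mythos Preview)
Your reduction to Theorem~\ref{main theorem} is natural, and the inheritance argument showing that $R$ is again $p$-central of height $p-2$ (resp.\ $p^2$-central of height $p-1$) is correct. The gap is exactly where you flagged it: you have no argument for $[\Omega_1(R),A]=1$, and the sketch you offer does not produce one. Lemma~\ref{main lemma}(a) and the pretransfer computations in the proof of Theorem~\ref{nwthm} are tools for comparing the transfer images of $G$ and of a subgroup $N$ containing $N_G(P)$; they never yield the statement ``a specific $p'$-element centralises a given $p$-subgroup''. Running a pretransfer inside $N_G(R)$ would only tell you something about $N_G(R)/A^p(N_G(R))$, not about $N_G(R)/C_G(R)$. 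Your own observation that $C_p\times C_p$ is $p$-central yet admits many $p'$-automorphisms already shows that no amount of internal commutator collapse in $R$ can force $A$ to act trivially; and ``$P\cap Q$ is a tame intersection'' only says that $N_P(R)\in Syl_p(N_G(R))$, which places no restriction whatsoever on $p'$-elements of $N_G(R)$. So the hypothesis of Theorem~\ref{main theorem} simply need not hold for a $p$-central $P$, and this route does not close.

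The paper proceeds quite differently, by induction on $|G|$ rather than by verifying the tame-intersection hypothesis. One sets $Z=\Omega(P)$; since elements of order $p$ (resp.\ $p^2$) lie in $Z_{p-2}(P)$ (resp.\ $Z_{p-1}(P)$), $Z$ is weakly closed in $P$ and $Z\le Z_{p-1}(P)$, so $N_G(Z)$ controls $p$-transfer in $G$ by the classical weakly-closed result. If $N_G(Z)<G$ one finishes by induction. If $Z\trianglelefteq G$, the crucial external input is Theorem~\ref{theoremB} (Gonz\'alez-S\'anchez--Weigel), which says $P/Z$ again satisfies the height hypothesis; induction gives that $N_G(P)/Z$ controls $p$-transfer in $G/Z$, and Lemma~\ref{main lemma}(a) (with $Z\le Z_{p-1}(P)$) lifts this to $G$. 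The point is that Lemma~\ref{main lemma} is used in its intended role --- descending through a normal $p$-subgroup of $G$ --- rather than as a device to kill $p'$-automorphisms of a proper section.
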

	
	\begin{remark}
		Let $G$ be a group and $P\in Syl_p(G)$. Assume that $P$ is $p$-central of height $p-2$ for an odd prime $p$. 
		By (\cite{6}, Theorem E), $N_G(P)$ controls $G$-fusion if $G$ is a $p$-solvable group. In this case, $N_G(P)$ also controls $p$-transfer in $G$. On the other hand, Theorem \ref{generalized p central} guarantees that $N_G(P)$ controls $p$-transfer in $G$ for an arbitrary finite group $G$.
	\end{remark}
We need the following result in the proof of Theorem \ref{generalized p central}.

\begin{theorem}\cite[Theorem B]{6}\label{theoremB}
	Let $G$ be a group. If $G$ is $p$-central of height $p-2$ or $p^2$-central of height of $p-1$, then so is $G/\Omega(G)$.
\end{theorem}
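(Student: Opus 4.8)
The plan is to treat $G$ as a finite $p$-group (the only setting in which $\Omega(G)$ and the upper central series are relevant) and to exploit the precise numerology of the heights through the $p$-power/commutator calculus. Write $V=\Omega(G)=\Omega_1(G)$ for the subgroup generated by the elements of order $p$, and $\bar G=G/V$. I read the two hypotheses as $\Omega_1(G)\le Z_{p-2}(G)$ and $\Omega_2(G)\le Z_{p-1}(G)$ respectively; the first is literally equivalent to $p$-centrality of height $p-2$, and for the second I will also use the resulting containment $\Omega_1(G)\le Z_{p-1}(G)$. Since $Z_k(G)Z/Z\le Z_k(G/Z)$ for any $Z\lhd G$, an element $\bar g$ of $\bar G$ of order $p$ (case $1$) lies in $Z_{p-2}(\bar G)$ as soon as $[g,x_1,\ldots,x_{p-2}]\in V$ for all $x_i\in G$, and an element of order $p^2$ (case $2$) lies in $Z_{p-1}(\bar G)$ as soon as $[g,x_1,\ldots,x_{p-1}]\in V$ for all $x_i$. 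Thus the whole theorem reduces to deciding when an iterated commutator of $g$ falls into $V$.

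First I would record a structural lemma: under either hypothesis $V=\Omega_1(G)$ has exponent $p$, so that $V=\{x\in G:x^p=1\}$ and membership of a single commutator $c$ in $V$ is equivalent to $c^p=1$. Indeed $V\le Z_{p-2}(G)$ (resp.\ $V\le Z_{p-1}(G)$), so $V$ is nilpotent of class less than $p$ and hence regular as $p$ is odd; a regular $p$-group generated by elements of order $p$ has exponent $p$. The target of the previous paragraph then becomes: show $[g,x_1,\ldots,x_{p-2}]^p=1$ when $g^p\in V$ (case $1$), and $[g,x_1,\ldots,x_{p-1}]^p=1$ when $g^{p^2}\in V$ (case $2$).

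The heart of the argument, and the reason the heights are exactly $p-2$ and $p-1$, is cleanest when $cl(G)<p$, where the Lazard correspondence attaches to $G$ a Lie ring $L$ on the same underlying set for which the bracket is the group commutator, $n$-th powers are $n$-fold Lie multiples, torsion matches element orders, and $Z_k(G)$ corresponds to $Z_k(L)$. Here $V$ corresponds to the $p$-torsion $L[p]$ and the hypotheses read $L[p]\subseteq Z_{p-2}(L)$ (resp.\ $L[p^2]\subseteq Z_{p-1}(L)$). For $\bar g$ of order $p$ the lift $v$ satisfies $pv\in L[p]$, i.e.\ $p^2v=0$; by bilinearity of the bracket
$$p\,[v,y_1,\ldots,y_{p-2}]=[pv,y_1,\ldots,y_{p-2}],$$
and since $pv\in L[p]\subseteq Z_{p-2}(L)$ the right-hand side vanishes, giving $[v,y_1,\ldots,y_{p-2}]\in L[p]=V$. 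The $p^2$-central case is identical: now $pv\in L[p^2]\subseteq Z_{p-1}(L)$, so $p-1$ brackets annihilate it. This one-line linear computation settles the class-$<p$ case and pins down the numerology, since multiplying by $p$ shifts $p^i$-torsion to $p^{i-1}$-torsion and a $p^i$-torsion element is killed by a number of brackets equal to its prescribed height.

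For groups of class at least $p$ the Lazard dictionary is unavailable, and the exact identity above must be replaced by its group-theoretic shadow obtained from the Hall--Petrescu collection formula. Iterating the basic congruence $[g^p,x]\equiv[g,x]^p$ modulo commutators of weight at least $p$ and $p$-th powers of commutators, one peels the exponent outward to obtain
$$[g,x_1,\ldots,x_{p-2}]^p\equiv[g^p,x_1,\ldots,x_{p-2}]\pmod{E},$$
where $E$ collects the accumulated error terms; since $g^p\in V\le Z_{p-2}(G)$ the right-hand side is trivial, so it remains to prove $E\subseteq V$. This last point is the main obstacle: one must show that every error term---each a $p$-th power of a commutator, or an iterated commutator of weight at least $p$ formed from $g,x_1,\ldots,x_{p-2}$---has order dividing $p$, and it is precisely here that the oddness of $p$, the exponent-$p$ property of $\Omega_1(G)$, and the exact values $p-2$ and $p-1$ of the heights are consumed. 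I expect this error bookkeeping (equivalently, the assertion that the $p$-power map is additive modulo $\Omega_1(G)$ on the relevant sections) to be the only genuinely delicate step; the $p^2$-central case then follows by the same scheme with one extra commutator and with $g^{p^2}\in V$ in place of $g^p\in V$.
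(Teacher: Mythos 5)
Your attempt should be judged as a standalone proof, because the paper does not prove this statement at all: it is imported verbatim from González-Sánchez--Weigel \cite[Theorem B]{6}, and the paper's ``proof'' is the citation. Much of your setup is sound. The reduction of the theorem to showing $[g,x_1,\ldots,x_{p-2}]\in V$ (resp.\ $[g,x_1,\ldots,x_{p-1}]\in V$) for all $x_i\in G$ whenever $g^p\in V$ (resp.\ $g^{p^2}\in V$) is correct, and so is your preliminary lemma that $V=\Omega_1(G)$ has exponent $p$: it lies in $Z_{p-1}(G)$, hence has class less than $p$, hence is regular for odd $p$, hence has exponent $p$ being generated by elements of order $p$. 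Your Lazard computation is also a genuine and clean proof in the special case $cl(G)<p$, modulo one inaccuracy: under the Lazard correspondence the group commutator is \emph{not} literally the Lie bracket (they agree only up to higher-weight BCH terms). This slip is harmless and repairable, since the correspondence does preserve torsion, the upper central series, and quotients by ideals, so one can argue entirely inside $L$: from $pv\in L[p]\subseteq Z_{p-2}(L)$ one gets $p[v,y_1,\ldots,y_{p-2}]=[pv,y_1,\ldots,y_{p-2}]=0$, hence $\bar v\in Z_{p-2}(L/L[p])$, and then translate back.

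The genuine gap is the case $cl(G)\geq p$, which you explicitly leave as ``error bookkeeping'' --- but that bookkeeping \emph{is} the theorem, and in the naive form you sketch it fails. The Hall--Petrescu congruence $[g,x_1,\ldots,x_{p-2}]^p\equiv[g^p,x_1,\ldots,x_{p-2}] \ mod \ E$ accumulates error terms that are iterated commutators of weight at least $p$ in $g,x_1,\ldots,x_{p-2}$, together with $p$-th powers of lower commutators, and nothing in the hypothesis forces these into $V$: the hypothesis $\Omega_1(G)\leq Z_{p-2}(G)$ constrains \emph{elements of order $p$}, not high-weight commutators, which in a group of large class can have large order. This is precisely why the cited proof does not proceed by raw collection: González-Sánchez and Weigel first establish powerful/potent-type structure (potent filtrations) for $p$-central groups of height $p-2$, and only through that machinery do the power-commutator computations close up; an additional structural idea, not just BCH numerology, is required. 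Two smaller omissions: the statement concerns an arbitrary finite group, so your silent reduction to $p$-groups needs an argument (for instance, that $p'$-elements centralize the $p$-part of the hypercenter by coprime stability of the chain $Z_i(G)$); and in the $p^2$-central case you should pin down whether $\Omega(G)$ denotes $\Omega_1(G)$ or $\Omega_2(G)$ --- the paper leaves this ambiguous, and your Lie computation adapts to either, but the two statements are not identical.
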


\begin{proof}[\textbf{Proof of Theorem \ref{generalized p central}}]
	We proceed by induction on the order $G$. Set $Z=\Omega(P)$.		
	Clearly, $Z$ is weakly closed in $P$. Since $\Omega(P)\leq Z_{p-1}(P)$, $N_G(Z)$ controls $p$-transfer in $G$ by (\cite{4}, Theorem 14.4.2).
	
	 If $N_G(Z)<G$ then $N_G(Z)$ clearly satisfies the hypothesis, and hence $N_G(P)$ controls $p$-transfer in $N_G(Z)$. It follows that $P\cap G'=P\cap N_G(Z)'=P\cap N_G(P)'$, and hence $N_G(P)$ controls $p$-transfer in $G$.
	 
	  Now assume that $Z\unlhd G$. By Theorem \ref{theoremB}, $P/Z$ is a Sylow $p$-subgroup of $G/Z$, which is $p$-central of height $p-2$ or $p^2$-central of height of $p-1$. Thus, $N_G(P)/Z=N_{G/Z}(P/Z)$ controls $p$-transfer in $G/Z$ by induction. Since $Z\leq Z_{p-1}(P)$, the result follows by Lemma \ref{main lemma}.
\end{proof}

\begin{conclusion*}
	
	``Control $p$-transfer theorems" supply many nonsimplicity theorems by their nature.
	Let $N$ be a subgroup of a group $G$ such that $|G:N|$ is coprime to $p$. If $N$ controls $p$-transfer in $G$ and $O^p(N)<N$ then $G$ is not simple of course.
	
	It is an easy exercise to observe that if $K$ is a normal $p'$-subgroup of $G$, and write $\overline G=G/K$, then $\overline N$ controls $p$-transfer in $\overline G$ if and only if $N$ controls $p$-transfer in $G$. However, this need not be true if $K$ is a $p$-group. Thus, Lemma \ref{main lemma} supplies an important criterion for that purpose and it enables the usage of the induction in  the proof of ``Control $p$-transfer theorems". It also seems that Lemma \ref{Z version} can be improved further by better commutator tricks or more careful analysis of the transfer map. 
	
	Proposition \ref{main prop} shows that when some certain characteristic subgroups of a Sylow subgroup are weakly closed in $P$, $N_G(P)$ controls $p$-transfer in $G$. One can ask that whether the converse of this statement is true? Another natural question is that whether ``control fusion" analogue of Lemma \ref{main lemma} and Proposition \ref{main prop} are possible.
	
	When we combine Proposition \ref{main prop} with Alperin Fusion theorem, we obtain our main theorems, which simply say that $N_G(P)$ tends to controls $p$-transfer in $G$ if intersection of Sylow subgroups is not ``too big". We also sharpen our result when $p=2$ via Theorem \ref{Z version} and deduce two new versions of Frobenius normal complement theorem namely, Corollary \ref{gen Frob} and Corollary \ref{gen frob Z-version}. Since, we can not directly appeal to Thompson-Glauberman $p$-nilpotency theorems when $p=2$ (and $G$ is not $S_4$ free), the contribution of Corollary \ref{gen frob Z-version} is important.
	
	Besides the other applications, Theorem \ref{generalized p central} shows that $N_G(P)$ controls $p$-transfer for groups which have Sylow subgroup isomorphic to one of the two important classes of $p$-groups, namely, $p$-central of height $p-2$ or $p^2$-central of height of $p-1$.
	 
	Even if we supply some limited applications here, we think that above theorems have nice potential of proving nonsimplicity theorems in finite group theory.

\end{conclusion*}

\section*{Acknowledgements}
I would like to thank Prof. George Glauberman for his helpful comments.

\end{document}